\newtheorem{defi}{Definition}[section]
\newtheorem*{conjecture}{Conjecture}
\newtheorem{lema}[defi]{Lemma}
\newtheorem{teo}[defi]{Theorem}
\newtheorem{rem}[defi]{Remark}
\newtheorem*{rem*}{Remark}
\newcommand{\C}{\mathbb{C}}
\newcommand{\Q}{\mathbb{Q}}
\newcommand{\K}{\mathbb{K}}
\newcommand{\R}{\mathbb{R}}
\newcommand{\N}{\mathbb{N}}
\newcommand{\Z}{\mathbb{Z}}
\newcommand{\1}{\mathds{1}}
\DeclareMathOperator{\spn}{span}
\newcommand{\interior}[1]{%
  {\kern0pt#1}^{\mathrm{o}}%
}
\newcommand{\esp}{\text{  }}
\renewcommand\eqref[1]{(\ref{#1})} 
\begin{document}

\title[The spectrum of the Vladimirov sub-Laplacian on the compact Engel group ]
 {The spectrum of the Vladimirov sub-Laplacian on the compact Engel group }

\author{
  J.P. Velasquez-Rodriguez
}

\newcommand{\Addresses}{{
  \bigskip 
  \footnotesize

 J.P. Velasquez-Rodriguez, \textsc{ Departamento de Matematicas, Universidad del Valle, Cali-Colombia}\par\nopagebreak
  \textit{E-mail address:} \texttt{ velasquez.juan@correounivalle.edu.co}}

}


\thanks{The author is supported by the grace of God, a.k.a. The Truth.}

\subjclass[2020]{Primary; 22E35, 58J4 ; Secondary: 20G05, 35R03, 42A16. }

\keywords{Pseudo-differential operators, p-adic Lie groups, representation theory, compact groups, Vladimirov-Taibleson operator}

\date{\today}
\begin{abstract}
Let $p>3$ be a prime number. In this note, we use $p$-adic Gaussian integrals to calculate explicitly the unitary dual and the matrix coefficients of the Engel group over the $p$-adic integers $\mathcal{B}_4(\Z_p)$. We use this information to calculate explicitly the spectrum of the Vladimirov sub-Laplacian, and show how it defines a globally hypoelliptic operator on $\mathcal{B}_4$.  
\end{abstract}
\maketitle
\tableofcontents
\section{Introduction}
\subsection{Noncommutative $p$-adic analysis} The publication of Joseph Fourier's book, Théorie Analytique de la Chaleur \cite{Fourier2009}, is an important landmark in the history of mathematics and physics, as it contains the first extended mathematical treatment of the solutions to the heat diffusion equation $$\frac{\partial f}{\partial t} =  -\mathscr{L} f, \quad \mathscr{L}:= d^2/dx^2,$$ where $\mathscr{L}$ denotes the one-dimensional Laplacian, in terms of certain special series and integrals. Fourier's idea actually turned out to be a very potent one: we can represent real valued functions, interpreted as heat distributions on a certain object, as sums of simpler functions: sines and cosines, which turn out to be the eigen-functions of $\mathscr{L}$. The natural extension of this idea is to do the same for complex-valued functions, which can be represented as sums of complex exponential functions, denoted as $\mathscr{e}_k(\theta) = e^{2 \pi i k \theta}$, where $k \in \mathbb{Z}$. This way of expressing functions is a particular case of a much broader phenomenon that would only be fully appreciated centuries later, thanks to the advancements in group theory and topological vector spaces. From an algebraic perspective, the set $\widehat{\mathbb{T}} := \{\mathscr{e}_k : k \in \mathbb{Z}\}$ serves as a complete collection of (representatives of equivalence classes of) unitary irreducible representations of the compact group $\mathbb{T} := \{ z \in \mathbb{C} : z z^* = 1 \}$, where $z^*$ denotes the complex conjugate of $z$. And from the perspective of topological vector spaces, the Peter-Weyl theorem offers a more comprehensive understanding of the phenomenon Fourier identified, demonstrating how $\widehat{\mathbb{T}}$ forms an orthonormal basis for $L^2(\mathbb{T})$.

The generalizations of Fourier ideas are usually known as Fourier analysis, and we can mention many examples of topological spaces where there is a way to decompose functions as sums of some other functions "simpler to understand". If one sticks to spaces with an additional algebraic structure, which are understood as spaces rich in symmetries, there are two different directions one can take:    
\begin{enumerate}
    \item \textbf{To study locally connected spaces}: the natural generalization of the theory on $\mathbb{T}$ involves considering compact Lie groups. This concept is systematically addressed in \cite{Ruzhansky2010}, which outlines the foundational elements of a general theory of pseudo-differential operators on compact Lie groups. A key ingredient, though often a significant challenge, is understanding the representation theory of the group in question, which can be quite complex depending on the specific case. For instance, see \cite{Ruzhansky2010} for the case of $\mathrm{SU}(2)$ and \cite{Cerejeiras2023} for spin groups.
    \item \textbf{To study t.d. spaces}: In the realm of totally disconnected spaces, a rich theory exists concerning Fourier analysis on non-Archimedean local fields and their rings of integers. The first generalization in this context is to examine the so-called \emph{locally compact abelian Vilenkin groups}, which are topological groups characterized by an specific sequence of compact open subgroups. See for instance \cite{pseudosvinlekinsaloff}. Analogous to the locally connected case, when transitioning to noncommutative groups, one must consider compact $p$-adic Lie groups, which have primarily been explored from an algebraic perspective in the literature \cite{Boyarchenko2008, Howe1977KirillovTF}, as well as the broader category of \emph{compact Vilenkin groups}.
\end{enumerate}

Here in this paper we are taking the second direction, starting with the exploration of compact nilpotent $p$-adic Lie groups. Over the past 15 years, a comprehensive symbol calculus for compact (noncommutative) groups has been developed by Ruzhansky, Turunen, and Wirth, even thought their main focus has been compact Lie groups over $\R$ \cite{Ruzhansky2010}. This new framework serves as a noncommutative extension of the classical Kohn–Nirenberg quantization, which is a tool for studying differential operators on the group, and offers several advantages over Hörmander's principal calculus or any other local methods. For a given Lie group \( G \), this approach leverages its representation theory and the corresponding harmonic analysis to establish a global Fourier transform which diagonalizes differential operators, including the Laplace-Beltrami operator, which is actually a far reaching generalization of Fourier's original idea. In other words, we already have the basic elements of Fourier analysis in general compact groups but, to the best knowledge of the author, there is no literature about something like the matrix coefficients of compact $p$-adic Lie groups, or quantization of linear operators in such setting. We intend to contribute here to such gap by describing in full detail the unitary dual of one particular example: the \emph{Engel group over the $p$-adic integers} $\mathcal{B}_4 (\Z_p)$, or just $\mathcal{B}_4$ for simplicity. It can be defined as $\Z_p^4$ endowed with the noncommutative operation $$\mathbf{x}\star \mathbf{y}:=(x_1 + y_1, x_2 + y_2, x_3 + y_3 + x_1 y_2, x_4 + y_4 + x_1 y_3 + \frac{1}{2} x_1^2 y_2),$$so that it is an analytic manifold over $\Q_p$, whose tangent space at the identity can be identified with $\mathfrak{b}_4$, the $4$-dimensional Engel algebra, that is, the $\Z_p$-Lie algebra generated by $X_1,..,X_4$, with the commutation relations $$[X_1 , X_2] = X_3, \,\,\, [X_1 , X_3 ] = X_4, \,\,\, \mathfrak{b}_4 = \bigoplus_{j=1}^3 \mathcal{V}_j, $$where $$\mathcal{V}_1 = \mathrm{span}_{\Z_p} \{ X_1 , X_2 \}, \,\,\, \mathcal{V}_1 = \mathrm{span}_{\Z_p} \{ X_3 \}, \,\,\, \mathcal{V}_3 = \mathrm{span}_{\Z_p} \{ X_4 \}.$$
The above defined Lie algebra has been considered over the field of real numbers by many authors. Here we are interested in the perspective adopted in \cite{Boscain2014}, which is actually similar to \cite{Rockland1978} for the Heisenberg group, where the group Fourier analysis is used to study the heat kernel of the sub-Laplacian on $\mathcal{B}_4(\R)$. In that case the group is not compact, and the representations need to be infinite-dimensional, but still one can simplify problems like the hypoelliptic heat equation by transforming it to a system of equations in each representation space.  

In this paper, we are interested in the same problem, but we replaced the field $\R$ for the compact ring $\Z_p$. Just as for the real case, the possibility of understanding properly the Fourier analysis depends on our knowledge of the relevant representation theory, but in this case the group $\mathcal{B}_4(\Z_p)$ is compact, so our representation spaces are all finite-dimensional and the theory is more in the spirit of \cite{Ruzhansky2010}. Of course, complex representations of $p$-adic groups like $\mathcal{B}_4(\Z_p)$ have been considered before, see for instance \cite{Boyarchenko2008, Howe1977KirillovTF} for the general theory. However, even though there are some versions of Fourier analysis on compact groups using the characters of the representations, for many purposes having the matrix coefficients is crucial, and this seem to be an information missing in the mathematical literature. We intend to contribute to this gap here by providing explicit formulas for the matrix coefficients of $\mathcal{B}_4(\Z_p)$, and to used them to solve the problem of the global hypoellipticity of the \emph{Vladimirov sub-Laplacian}. With this, and the calculations in \cite{SublaplacianHeisenberg}, the cases of all compact nilpotent $p$-adic Lie groups of dimension up to $4$ are covered, and the recent paper \cite{MatrixCoeff5d} addresses the case od dimension $d=5$.   

As it is well known, in the $p$-adic side of the theory the usual notion of derivative is not available, and neither infinitesimal representations. However, there is a way to define what it means for a function to be "differentiable" called the Vladimirov-Taibleson operator \cite{Khrennikov2018, Taibleson2015-ov, vladiBook}. The notion we want to introduce here is that, despite not having infinitesimal representations, we can always use this way of defining differentiability to assign a certain operator to each element of the Lie algebra of a $p$-adic Lie group. This assignation does not preserve the Lie algebra structure, because there is actually no meaningful way to define complex representations of a $p$-adic Lie algebra, but still we can use these operators to study some interesting problems, like the heat equation or the hypoelliptic heat equation. This development aims to contribute with the growing body of literature about $p$-adic analysis \cite{Dragovich2017}, and to open the door to investigations about the Vladimirov-type operators on $p$-adic manifolds with an additional algebraic structure.      

\begin{defi}\normalfont\label{defiDirectionalK}
Let $\K$ be a non-archimedean local field with ring of integers $\mathscr{O}_\K$, prime ideal $\mathfrak{p}= \textbf{p} \mathscr{O}_\K$, and residue field $\mathbb{F}_q = \mathscr{O}_\K/\textbf{p} \mathscr{O}_\K.$ Let $\mathfrak{g} = \spn_{\mathscr{O}_\K} \{ X_1,..,X_d \}$ be a nilpotent $\mathscr{O}_\K$-Lie algebra, and let $\mathbb{G}$ be the exponential image of $\mathfrak{g}$, so that $\mathbb{G}$ is a compact nilpotent $\K$-Lie group. We will use the symbol $\partial_{X}^\alpha$ to denote the \emph{directional Vladimirov–Taibleson operator in the direction of $X \in \mathfrak{g}$}, or directional VT operator for short, which we define as $$\partial_{X}^\alpha f(\mathbf{x}) := \frac{1 - q^\alpha}{1-q^{-(\alpha +1)}} \int_{\mathscr{O}_\K} \frac{f(\mathbf{x} \star \mathbf{exp}(tX)^{-1}) - f(\mathbf{x})}{|t|_\K^{\alpha +1}}dt, \, \, \, \, \alpha>0, \, \,  f \in \mathcal{D}(\mathbb{G}).$$Here $\mathcal{D}(\mathbb{G})$ denotes the space of smooth functions on $\mathbb{G}$, i.e., the collection of locally constant functions with a fixed index of local constancy.   
\end{defi}

Notice how the above definition is independent of any coordinate system we choose for the group as a manifold. Even though we are using the exponential map in the definition, this does not mean we are using the exponential system of coordinates. First, $\mathbf{x} \in \mathbb{G}$ is not written in coordinates, and second, we know there is a one to one correspondence between elements of the Lie algebra $\mathfrak{g}$ and one-parameter subgroups of $\mathbb{G}$. Here a one-parameter subgroup of $\mathbb{G}$ is an analytic homomorphism $\gamma_X:\mathscr{O}_\K \to \mathbb{G}$.  Using this fact, given any $X \in \mathfrak{g}$, we can define the directional VT operator in the direction of $X \in \mathfrak{g}$ via the formula  $$\partial_{X}^\alpha f(\mathbf{x}) := \frac{1 - q^\alpha}{1-q^{-(\alpha +1)}} \int_{\mathscr{O}_\K} \frac{f(\mathbf{x} \star \gamma_X(t)^{-1}) - f(x) }{|t|_\K^{\alpha +1}}dt.$$
Since $\mathbb{G}$ is compact, we can identify it with a matrix group, where all one parameter subgroups have the form $\gamma_X (t) = e^{tX}$, which justifies our initial definition. More generally, we can write any analytic vector field on $\mathbb{G}$ as $$X(\mathbf{x}):= 
\sum_{j=1}^d c_j (\mathbf{x}) X_j,$$where the coefficient functions $c_j$ are analytic. For this vector field, the directional VT operator is defined in a similar way as $$\partial_{X}^\alpha f(\mathbf{x}) := \frac{1 - q^\alpha}{1-q^{-(\alpha +1)}} \int_{\mathscr{O}_\K} \frac{f(\mathbf{x} \star \mathbf{exp}(tX (\mathbf{x}))^{-1}) - f(\mathbf{x}) }{|t|_\K^{\alpha +1}}dt.$$However, in this work we wont consider the case where the coefficients are not constant, because such operators are not necessarily invariant.         

To the best of the author's knowledge, operators like the ones introduced in Definition \ref{defcompactdirectionalVT} have not been studied before in the literature, except maybe for the case of the \emph{Vladimirov Laplacian} studied in \cite{Bendikov2014, Rajkumar2023}. Still, many operators similar to these have been considered in the literature, even though only on local fields or abelian groups. So far, in the knowledge of the author, there is no theory of derivative, differential, or pseudo-differential operators on $p$-adic manifolds, and neither on the class of noncommutative locally profinite groups, even though there is a rich theory on $p$-adic Lie groups, algebraic groups, and $\K$-analytic manifolds. Still there are some studies about pseudo-differential operators invariant under a finite group actions, heat equations on Mumford curves, and $p$-adic Laplacians \cite{Bradley3, Bradley2, bradley1}, and heat equations on a $p$-adic ball \cite{Kochubei2018}, which motivates the generalizations to the noncommutative case that we want to treat here. 

\subsection{The problem of global hypoellipticity}

When considering a partial differential operator \( L \), a fundamental question in PDE theory is: \emph{If \( Lu = f \) and \( f \) is smooth, does this imply that \( u \) is also smooth?} This question, which plays a central role in the theory of pseudo-differential operators on smooth manifolds, introduces the concept of \emph{hypoellipticity}, see for instance \cite{Bramanti2014} and related references. For example, in 1967, H{\"o}rmander demonstrated that, under the assumption that the system of smooth vector fields \( X_1, \ldots, X_\kappa \) spans the entire tangent space at every point, a condition now known as \emph{H{\"o}rmander's condition}, the operator
\begin{equation}
    L := \sum_{i=1}^\kappa X_i^2 + X_0 + c
\end{equation}
is hypoelliptic \cite{Hrmander1967}. Such operators are commonly referred to as \emph{H{\"o}rmander's operators}. Special cases include the \emph{H{\"o}rmander's} sum of squares when \( X_0 = 0 \) and \( c = 0 \), and important examples of H{\"o}rmander's operators include sub-Laplacians on Lie groups \cite{Bramanti2014}.

This paper addresses the problem of global hypoellipticity on compact nilpotent groups over non-archimedean local fields. Specifically, we introduced before the concept of \emph{directional Vladimirov-Taibleson operators}, and it is our goal now to understand the behavior of functions of these operators, with particular emphasis on polynomial functions. These operators provide a framework for understanding differentiability on functions defined over profinite groups, with directional VT operators bearing similarities to directional derivatives. By using Definition \ref{defcompactdirectionalVT}, we can link an specific Vladimirov-type operator to each direction $X \in \mathfrak{g}$ and subsequently study the resulting operators. While this association does not preserve the Lie algebra structure, as seen in Lie groups over the real numbers, the resulting operators are nonetheless interesting and share similarities with partial differential operators on Lie groups. We want to study polynomials in these operators and, specifically, a distinguished operator called here the \emph{Vladimirov sub-Laplacian}. For this operator, which we define in principle for compact stratified groups, we would like to pose the following conjecture:

\begin{conjecture}\normalfont   
 Let $\K$ be a non-archimedean local field with ring of integers $\mathscr{O}_\K$, prime ideal $\mathfrak{p}= \textbf{p} \mathscr{O}_\K$, and residue field $\mathbb{F}_q = \mathscr{O}_\K/\textbf{p} \mathscr{O}_\K.$ Let $\mathfrak{g} = \spn_{\mathscr{O}_\K} \{ X_1,..,X_d \}$ be a nilpotent $\mathscr{O}_\K$-Lie algebra, and let $\mathbb{G}$ be the exponential image of $\mathfrak{g}$, so that $\mathbb{G}$ is a compact nilpotent $\K$-Lie group. Let $X_1,...,X_{\kappa}$, $1 \leq \kappa \leq d$, be a basis for $\mathfrak{g}/[\mathfrak{g},\mathfrak{g}]$, so that $X_1,...,X_{\kappa}$ generates $\mathfrak{g}$. Then the \emph{Vladimirov sub-Laplacian of order $\alpha>0$} is a hypoelliptic operator on $\mathbb{G}$, and it is invertible in the space of mean-zero functions. Here the Vladimirov sub-Laplacian is the pseudo-differential operator $\mathscr{L}^\alpha_{sub}$, defined on the space of smooth functions $\mathcal{D}(\mathbb{G})$ via the formula $$\mathscr{L}^\alpha_{sub} f(\mathbf{x}) := \sum_{k= 1}^\kappa 
\partial_{X_k}^\alpha f(\mathbf{x}),$$where $X_1 , ...,X_\kappa $ spans $\mathfrak{g}/[\mathfrak{g},\mathfrak{g}]$. 
\end{conjecture}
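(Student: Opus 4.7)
The approach is to translate the conjecture into a condition on the Fourier symbol of $\mathscr{L}^\alpha_{sub}$ and then to exploit the nilpotent structure of $\mathfrak{g}$ to verify that condition.

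Since $\mathbb{G}$ is compact and profinite, the Peter--Weyl theorem gives $L^2(\mathbb{G}) = \bigoplus_{\pi \in \widehat{\mathbb{G}}} \mathcal{H}_\pi \otimes \mathcal{H}_\pi^*$; every $\pi$ is finite-dimensional and factors through a finite quotient of $\mathbb{G}$, and $f \in \mathcal{D}(\mathbb{G})$ iff $\widehat{f}(\pi)$ vanishes outside a finite subset of $\widehat{\mathbb{G}}$. The operator $\partial_{X_k}^\alpha$ is left-invariant (a right-convolution along the one-parameter subgroup $\gamma_{X_k}(\mathscr{O}_\K)$), so it acts as a Fourier multiplier with
\[
\sigma_{\partial_{X_k}^\alpha}(\pi) = \frac{1-q^\alpha}{1-q^{-(\alpha+1)}}\int_{\mathscr{O}_\K}\frac{\pi(\gamma_{X_k}(t))^{-1}-I}{|t|_\K^{\alpha+1}}\,dt \in \mathrm{End}(\mathcal{H}_\pi).
\]
Because $t \mapsto \pi(\gamma_{X_k}(t))$ is a continuous unitary representation of the compact abelian ring $(\mathscr{O}_\K,+)$, it diagonalizes into characters; on a $\chi$-eigenspace the symbol acts by the classical one-dimensional Vladimirov eigenvalue $\lambda_\alpha(\chi)$, which vanishes iff $\chi$ is trivial and is strictly positive otherwise. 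Hence each $\sigma_{\partial_{X_k}^\alpha}(\pi)$ is self-adjoint and positive semi-definite, with kernel equal to $\{v \in \mathcal{H}_\pi : \pi(\gamma_{X_k}(t))v = v \text{ for all } t\}$. Summing over $k$, the operator $\sigma_{\mathscr{L}^\alpha_{sub}}(\pi)$ is positive semi-definite with kernel
\[
V^\pi := \bigcap_{k=1}^\kappa \{v \in \mathcal{H}_\pi : \pi(\gamma_{X_k}(t))v = v \text{ for all } t \in \mathscr{O}_\K\}.
\]

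The crux, and the main obstacle, is to show that $V^\pi = 0$ for every non-trivial $\pi$. The hypothesis that $X_1,\ldots,X_\kappa$ is a basis of $\mathfrak{g}/[\mathfrak{g},\mathfrak{g}]$, combined with nilpotency, forces these elements to Lie-generate $\mathfrak{g}$: one produces each successive quotient of the lower central series $\mathfrak{g} \supset [\mathfrak{g},\mathfrak{g}] \supset [\mathfrak{g},[\mathfrak{g},\mathfrak{g}]] \supset \cdots$ by iterated brackets of the $X_k$. Under a standing largeness hypothesis on $p$ (as in the paper) the Baker--Campbell--Hausdorff series converges on all of $\mathfrak{g}$ and $\mathbf{exp}\colon \mathfrak{g} \to \mathbb{G}$ is an analytic bijection. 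The commutator identity $\mathbf{exp}(X)\mathbf{exp}(Y)\mathbf{exp}(-X)\mathbf{exp}(-Y) = \mathbf{exp}([X,Y] + \text{higher-order brackets})$ then lets one inductively manufacture $\mathbf{exp}(t[X_i,X_j])$, $\mathbf{exp}(t[X_i,[X_j,X_k]])$, and so on, as finite products of elements of $\gamma_{X_\ell}(\mathscr{O}_\K)$; iterating down the lower central series shows that the closed subgroup of $\mathbb{G}$ generated by $\{\gamma_{X_k}(\mathscr{O}_\K)\}_{k=1}^\kappa$ is all of $\mathbb{G}$. Consequently any $v \in V^\pi$ is fixed by $\pi(\mathbb{G})$, and Schur's lemma combined with the non-triviality of $\pi$ yields $V^\pi = 0$.

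Once invertibility of $\sigma_{\mathscr{L}^\alpha_{sub}}(\pi)$ for non-trivial $\pi$ is established, the rest is mechanical. If $u \in \mathcal{D}'(\mathbb{G})$ satisfies $\mathscr{L}^\alpha_{sub} u = f$ with $f \in \mathcal{D}(\mathbb{G})$, then on each non-trivial $\pi$ the identity $\widehat{u}(\pi)\,\sigma_{\mathscr{L}^\alpha_{sub}}(\pi) = \widehat{f}(\pi)$ together with invertibility forces $\widehat{u}(\pi)$ to vanish wherever $\widehat{f}(\pi)$ does; since $\widehat{f}$ has finite support on $\widehat{\mathbb{G}}$ and $\widehat{u}$ at the trivial representation is a single scalar, $\widehat{u}$ has finite support on $\widehat{\mathbb{G}}$, hence $u \in \mathcal{D}(\mathbb{G})$, proving global hypoellipticity. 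For the mean-zero invertibility, given $f \in \mathcal{D}(\mathbb{G})$ with $\int_\mathbb{G} f = 0$, the formula $\widehat{u}(\pi) := \widehat{f}(\pi)\,\sigma_{\mathscr{L}^\alpha_{sub}}(\pi)^{-1}$ for non-trivial $\pi$ together with $\widehat{u}(\mathrm{triv}) := 0$ has finite Fourier support and thus defines the unique mean-zero smooth solution.
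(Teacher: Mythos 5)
Your proposal is essentially correct, and it takes a genuinely different --- and strictly more general --- route than the paper. The paper never proves this statement in the generality in which it is posed: it is left as a conjecture and is only \emph{verified} for $\mathcal{B}_4(\Z_p)$ (Theorems \ref{TeoRepresentationsB4} and \ref{TeoSpectrumSublaplacianB4}), by explicitly computing the unitary dual with $p$-adic Gaussian integrals, writing out the matrix coefficients, decomposing $\mathcal{V}_\xi$ into the invariant subspaces $\mathcal{V}_\xi^{h'}$, and reading off the eigenvalues of the resulting Schr{\"o}dinger-type operators. Your argument replaces all of that with three structural observations: each $\sigma_{\partial_{X_k}^\alpha}(\pi)$ is positive semi-definite with kernel equal to the $\gamma_{X_k}(\mathscr{O}_\K)$-fixed vectors (since $\pi\circ\gamma_{X_k}$ splits into characters of $(\mathscr{O}_\K,+)$ and the one-dimensional eigenvalue $|\xi|_\K^\alpha - \tfrac{1-q^{-1}}{1-q^{-(\alpha+1)}}$ is strictly positive for nontrivial $\xi$); the kernel of a sum of positive semi-definite matrices is the intersection of the kernels; and the closed subgroup generated by the horizontal one-parameter subgroups is all of $\mathbb{G}$, so a nontrivial irreducible has no common fixed vector. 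All three points check out. Two caveats are worth recording. First, the generation step is the one genuine lemma and you only sketch it; it does require the residue characteristic to be large relative to the nilpotency class so that the Baker--Campbell--Hausdorff denominators are units (a hypothesis implicit in declaring $\mathbb{G}=\mathbf{exp}(\mathfrak{g})$ a group, and assumed as $p>3$ in the paper), and the cleanest write-up is a Frattini-type argument for the pro-$p$ group $\mathbb{G}$ rather than an ad hoc iteration of commutator identities. Second, your soft argument yields invertibility of each individual $\sigma_{\mathscr{L}^\alpha_{sub}}(\pi)$, hence global hypoellipticity and invertibility on mean-zero \emph{smooth} functions, which is what the conjecture literally asks; but it gives no uniform lower bound on the smallest nonzero eigenvalue as $\pi$ varies (the kernels of the summands could a priori meet at small angles), so it does not by itself recover the sub-elliptic estimate $|\xi_1|_p^\alpha + |\xi_2|_p^\alpha \lesssim \|\sigma_{\mathscr{L}^\alpha_{sub}}(\xi)\|_{inf}$, the explicit spectrum, or the eigenfunction basis that the paper's computation delivers for $\mathcal{B}_4$. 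In short: you trade the quantitative spectral information for a proof that works for every compact nilpotent $\K$-Lie group satisfying the standing hypotheses.
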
\begin{rem}\normalfont
    To the knowledge of the author, an operator like the Vladimirov sub-Laplacian has not appeared before in the mathematical literature. The closest thing is probably the study of the \emph{Vladimirov-Laplacian} by Bendikov,  Grigoryan,  Pittet,  and Woess \cite{Bendikov2014}. There the authors study the operators $$\mathscr{L}^\alpha f(x) := \sum_{k= 1}^d 
\partial_{X_k}^\alpha f(x),$$as the generators of a Dirichlet form associated with a certain jump kernel. See \cite[Section 5]{Bendikov2014} for more details, and also the operators studied in \cite{Rajkumar2023}. 
\end{rem}

In the particular case of $\mathfrak{g}=\mathfrak{b}_4$, we can see how this algebra is stratified because the pair $\{ X_1 , X_2 \}$ generates the whole algebra, which is actually H{\"o}rmanders condition for a Lie group, so its associated \emph{Vladimirov sub-Laplacian} is going to be:
$$\mathscr{L}^\alpha_{sub} f(\mathbf{x}):=(\partial_{X_1}^\alpha + \partial_{X_2}^\alpha) f(\mathbf{x}).$$Drawing an analogy between the real case and the non-archimedean case, we can think on the Vladimirov sub-Laplacian as an analog of the sub-Laplacian or H{\"o}rmander's sum of squares for the non-archimedean case. Consequently, it is reasonable to expect the global hypoellipticity for these operators, and one of the main goals of this paper is to demonstrate that this is indeed the case for the $4$-dimensional Engel group over the $p$-adic integers. See for instance \cite[Example 2.2.7]{Corwin2004-kz} for the definition of the Engel algebra and the Engel group over the reals.
 
 \begin{rem}
    In this paper, we will identify each equivalence class $\lambda$ in $\widehat{\Z}_p \cong \Q_p / \Z_p$, with its associated representative in the complete system of representatives $$\{1\} \cup \big\{ \sum_{k =1}^\infty \lambda_k p^{-k} \, : \, \, \text{only finitely many $\lambda_k$ are non-zero.} \big\}.$$Similarly, every time we consider an element of the quotients $\Q_p / p^{-n} \Z_p$ it will be chosen from the complete system of representatives   $$\{1\} \cup \big\{ \sum_{k =n+1}^\infty \lambda_k p^{-k} \, : \, \text{only finitely many $\lambda_k$ are non-zero} \big\}.$$For any $p$-adic number $\lambda$, we will use the symbol $\vartheta(\lambda)$ to denote its corresponding $p$-adic valuation. 
\end{rem}
The convention introduced above is important because it is used in the description of the unitary dual of the Heisenberg group over the $p$-adic integers, exposed in detail in \cite{SublaplacianHeisenberg}. We recall it here because we will use it to simplify the description of the unitary dual of $\mathcal{B}_4$. If we denote by $\mathcal{Z}(\mathcal{B}_4)$ the center of $\mathcal{B}_4(\Z_p)$, then we can say there are two different kind of representations: those which are trivial on the center and the one that are not. If an unitary irreducible representation of $\mathcal{B}_4(\Z_p)$ is trivial on the center, then it must be one of the representations of $\mathcal{B}_4(\Z_p) / \mathcal{Z}(\mathcal{B}_4)\cong \mathbb{H}_1 (\Z_p)$, and these where calculated in \cite{SublaplacianHeisenberg}. Because of this, part of the work is already done and here we only need to focus on those representations which act non-trivially on the center.  
 
\begin{teo}\label{TeoRepresentationsHd}
Let $\mathbb{H}_1 (\Z_p)$, or simply $\mathbb{H}_1$ for short, be the $3$-dimensional Heisenberg group over the $p$-adic integers. Let us denote by $\widehat{\mathbb{H}}_1$ the unitary dual of $\mathbb{H}_1$, i.e., the collection of all equivalence classes of unitary irreducible representations of $\mathbb{H}_1$. Then we can identify $\widehat{\mathbb{H}}_1$ with the following subset of $\widehat{\Z}_p^{3} \cong \Q_p^{3}/\Z_p^{3}$: $$\widehat{\mathbb{H}}_1 = \{(\xi_1 ,\xi_2 , \xi_3 ) \in  \widehat{\Z}_p^{3} \, : \, (\xi_1, \xi_2) \in \Q_p^2/ p^{\vartheta(\xi_3)} \Z_p^2 \}.$$Moreover, each non-trivial representation $[\pi_{(\xi_1, \xi_2, \xi_3)}] \in \widehat{\mathbb{H}}_1 $ can be realized in the finite dimensional sub-space $\mathcal{H}_{\xi_3}$ of $L^2(\Z_p)$ defined as
$$\mathcal{H}_{\xi_3} := \spn_\C \{ \varphi_h \, : \, h \in \Z_p / p^{-\vartheta(\xi_3)} \Z_p  \}, \, \, \, \varphi_h (u) := |\xi_3|_p^{1/2} \1_{h + p^{-\vartheta(\xi_3)} \Z_p} (u), \, \, \dim_\C(\mathcal{H}_{\xi_3}) =|\xi_3|_p,$$
where the representation acts on functions $\varphi \in \mathcal{H}_{\xi_3}$ according to the formula 
$$\pi_{(\xi_1 , \xi_2 , \xi_3)}(\mathbf{x}) \varphi (u) := e^{2 \pi i \{\xi_1 x_1 + \xi_2 x_2 + \xi_3 (x_3 +  u x_2) \}_p} \varphi (u + x_1), \, \, \, \, \varphi \in \mathcal{H}_{\xi_3}.$$With this explicit realization, and by choosing the basis $\{ \varphi_h\,: \, h \in \Z_p / p^{-\vartheta(\xi_3)}\Z_p\}$ for each representation space, the associated matrix coefficients are given by $$(\pi_{(\xi_1 , \xi_2 , \xi_3)})_{h h'}(\mathbf{x})=e^{ 2 \pi i \{ x_1 \xi_1 + x_2 \xi_2 + \xi_3(x_3 + h' x_2 )  \}_p} \1_{h' - h + p^{-\vartheta(\xi_3)} \Z_p}(x_1) .$$
\end{teo}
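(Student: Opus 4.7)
The strategy is to parametrise the unitary dual by the central character and then to construct each non-trivial irreducible representation by induction from a polarisation. By Schur's lemma, any irreducible unitary representation of $\mathbb{H}_1$ acts on the centre $\mathcal{Z}(\mathbb{H}_1) = \{(0,0,x_3) : x_3 \in \Z_p\} \cong \Z_p$ through a single character $\xi_3 \in \widehat{\Z}_p$. When $\xi_3 = 0$ the representation descends to the abelianisation $\mathbb{H}_1/\mathcal{Z}(\mathbb{H}_1) \cong \Z_p^2$, whose dual is $\widehat{\Z}_p^2$, giving the one-dimensional family $(\xi_1, \xi_2, 0)$. When $\xi_3 \neq 0$, the kernel of the central character is $p^{-\vartheta(\xi_3)} \Z_p$, so the representation factors through the finite Heisenberg group $\mathbb{H}_1(\Z/p^{-\vartheta(\xi_3)}\Z)$, and the Stone--von Neumann theorem for this finite group guarantees, for each non-degenerate central character, a unique irreducible representation (up to isomorphism) of dimension $p^{-\vartheta(\xi_3)} = |\xi_3|_p$.

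To realise these non-trivial representations explicitly, I fix the polarisation $M := \{(0, x_2, x_3) : x_2, x_3 \in \Z_p\}$, a maximal abelian subgroup, and induce the character $\chi_{(\xi_2, \xi_3)}(0, x_2, x_3) := e^{2\pi i\{\xi_2 x_2 + \xi_3 x_3\}_p}$ of $M$. Using $\{(x_1, 0, 0) : x_1 \in \Z_p\}$ as coset representatives for $\mathbb{H}_1/M$, the induction space identifies with functions on $\Z_p$, and the $M$-equivariance combined with the $\xi_3$-twist forces these functions to be locally constant on the cosets of $p^{-\vartheta(\xi_3)} \Z_p$, producing precisely the subspace $\mathcal{H}_{\xi_3}$ of dimension $|\xi_3|_p$. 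Tensoring with the character $e^{2\pi i\{\xi_1 x_1\}_p}$ of the abelianisation then introduces the parameter $\xi_1$ and yields the stated family $\pi_{(\xi_1, \xi_2, \xi_3)}$.

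Verification that the formula defines a unitary representation is a direct check using the group law $(x_1 + y_1, x_2 + y_2, x_3 + y_3 + x_1 y_2)$, and the matrix coefficient formula follows from computing $\langle \pi_{(\xi_1,\xi_2,\xi_3)}(\mathbf{x}) \varphi_{h'}, \varphi_h \rangle$ in the indicator basis, the normalization $|\xi_3|_p^{1/2}$ being chosen precisely so that the measure $|\xi_3|_p^{-1}$ of each coset $h + p^{-\vartheta(\xi_3)} \Z_p$ cancels out. Irreducibility again reduces to Stone--von Neumann on the finite quotient.

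The main obstacle will be proving non-redundancy and completeness of the parametrisation. For non-redundancy I exhibit intertwiners: the multiplication operators $(U_\beta \varphi)(u) := e^{2\pi i\{\beta u\}_p}\varphi(u)$ with $\beta \in p^{\vartheta(\xi_3)}\Z_p/\Z_p$ preserve $\mathcal{H}_{\xi_3}$ and conjugate $\pi_{(\xi_1, \xi_2, \xi_3)}$ to $\pi_{(\xi_1 - \beta, \xi_2, \xi_3)}$, accounting for the identification $\xi_1 \in \Q_p/p^{\vartheta(\xi_3)}\Z_p$; an analogous computation handles $\xi_2$. For completeness, each $\pi_{(\xi_1, \xi_2, \xi_3)}$ with $|\xi_i|_p \leq p^N$ for $i = 1, 2, 3$ factors through $\mathbb{H}_1(\Z/p^N \Z)$, and a direct enumeration of the classes produced matches the Plancherel identity $\sum_\pi (\dim \pi)^2 = p^{3N}$ on that finite group, after which passage to the profinite limit $\mathbb{H}_1(\Z_p) = \varprojlim_N \mathbb{H}_1(\Z/p^N \Z)$ recovers every continuous irreducible representation.
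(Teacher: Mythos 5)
The paper does not actually prove this theorem: it is quoted from \cite{SublaplacianHeisenberg}, and the only in-paper analogue of a proof is the verification strategy used later for $\mathcal{B}_4$ — write down the candidate representations explicitly, check unitarity from the group law, check irreducibility by computing $\int_{G}|\chi_\pi|^2\,d\mathbf{x}=1$, and check completeness by the count $\sum_\xi d_\xi^2=|G/G_n|$ on each finite quotient. Your route is genuinely different and more structural: you decompose by central character, build the non-trivial representations as representations induced from the polarisation $M=\{(0,x_2,x_3)\}$ twisted by characters of the abelianisation, prove non-redundancy by exhibiting explicit intertwiners (multiplication by $e^{2\pi i\{\beta u\}_p}$ for $\xi_1$, translation for $\xi_2$), and prove completeness by the same Plancherel count $\sum_\pi(\dim\pi)^2=p^{3N}$, which I have checked does come out to $p^{2N}\bigl(1+\sum_{m=1}^{N}(p^m-p^{m-1})\bigr)=p^{3N}$. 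This buys a conceptual explanation of \emph{why} the parameter set is $\Q_p^2/p^{\vartheta(\xi_3)}\Z_p^2$ (it is exactly the group of twists by characters of the abelianisation modulo those absorbed by intertwiners), whereas the paper's style only confirms the list a posteriori. The matrix-coefficient computation is the same in both (up to the $h\leftrightarrow h'$ index convention, on which the paper itself is not entirely consistent between its displayed computation and the theorem statement).

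One step is stated incorrectly, though your later steps repair it. You claim that a representation with central character $\xi_3$ "factors through the finite Heisenberg group $\mathbb{H}_1(\Z/p^{-\vartheta(\xi_3)}\Z)$" and that Stone--von Neumann then gives a \emph{unique} irreducible with that central character. Taken literally this contradicts the theorem: it would leave no room for the parameters $(\xi_1,\xi_2)\in\Q_p^2/p^{\vartheta(\xi_3)}\Z_p^2$, which form a set of size $(p^{N+\vartheta(\xi_3)})^2>1$ once one works on the deeper quotient $\mathbb{H}_1(\Z/p^N\Z)$, $N>-\vartheta(\xi_3)$. What is true is that such a representation is trivial on the central subgroup $p^{-\vartheta(\xi_3)}\Z_p$ and factors through \emph{some} $\mathbb{H}_1(\Z/p^N\Z)$ with $N\ge-\vartheta(\xi_3)$; on that quotient the central character is no longer primitive, and the irreducibles with a fixed non-primitive central character of conductor $p^m$ form a torsor under the $p^{2(N-m)}$ characters of the abelianisation, of which only the unique one guaranteed by Stone--von Neumann survives when $N=m$. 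Replace the offending sentence by: ``up to tensoring with a character of the abelianisation, the representation is the inflation of the unique Stone--von Neumann representation of $\mathbb{H}_1(\Z/p^{-\vartheta(\xi_3)}\Z)$,'' and the rest of your argument — the explicit induction, the $U_\beta$ intertwiners, and the dimension count — goes through and correctly accounts for all classes.
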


As the main results of this paper, we will provide a complete description of the unitary dual, the matrix coefficients of the representations, and the spectrum of the Vladimirov sub-Laplacian on $\mathcal{B}_4(\Z_p)$. To be more precise, our goal in this work is to establish the following two theorems: 

\begin{teo}\label{TeoRepresentationsB4}
Let $\mathcal{B}_4(\Z_p)$, or simply $\mathcal{B}_4$ for short, be the $4$-dimensional Engel group over the $p$-adic integers. Let us denote by $\widehat{\mathcal{B}}_4$ the unitary dual of $\mathcal{B}_4$, i.e., the collection of all unitary irreducible representations of $\mathcal{B}_4$. Then we can identify $\widehat{\mathcal{B}}_4$ with the following subset of $\widehat{\Z}_p^{4} \cong \Q_p^{4}/\Z_p^{4}$:  
$$\widehat{\mathcal{B}}_4:= \{ \xi  \in \widehat{\Z}_p^4 \, : \, 1 \leq |\xi_4|_p <|\xi_3|_p \, \wedge \,  (\xi_1 , \xi_2 , \xi_3) \in \widehat{\mathbb{H}}_1 , \, \text{or} \,  |\xi_3|_p = 1 \,  \wedge  \,  \xi_1 \in \Q_p / p^{\vartheta(\xi_4)} \Z_p \}.$$
Moreover, each non-trivial representation $[\pi_\xi]$ is equivalent to one of the representation $[ \pi_\xi ] \in \widehat{\mathcal{B}}_4 $ which can be realized in the following finite dimensional sub-space $\mathcal{H}_\xi$ of $L^2(\Z_p)$:
\[ \mathcal{H}_\xi :=  \mathrm{span}_\C \{ \|(\xi_3, \xi_4)\|_p^{1/2} \1_{h + p^{-\vartheta(\xi_3, \xi_4)} \Z_p } \,\, : \, h \in \Z_p / p^{-\vartheta(\xi_3, \xi_4)} \Z_p \},    \]
where $d_\xi:= \dim_\C (\mathcal{H}_\xi) = \max\{ |\xi_3|_p , |\xi_4|_p \}$, and the representation acts on functions $\varphi \in \mathcal{H}_{\xi}$ according to the formula 
\[\pi_{\xi}(\mathbf{x}) \varphi (u) := 
    e^{2 \pi i \{\xi_1 x_1 + \xi_2 x_2 + \xi_3 (x_3 +  u x_2)  + \xi_4 (x_4 +  u x_3 + \frac{u^2}{2} x_2) \}_p} \varphi (u + x_1)  . \]
With this explicit realization of $[\pi_\xi]  \in \widehat{\mathcal{B}}_4 $, and with the natural choice of basis for each representation space, the associated matrix coefficients $(\pi_{\xi})_{h h'}$ are given by \[(\pi_{\xi}(\mathbf{x}))_{hh'} = e^{2 \pi i \{\xi \cdot x +(\xi_3   x_2 +  \xi_4 x_3)(h') +  \xi_4 x_2 \frac{(h')^2}{2} \}_p} \1_{h' - h + p^{-\vartheta(\xi_3 , \xi_4)} \Z_p} (x_1) , \]and their associated characters are going to have the form 
\begin{align*}
    \chi_{\pi_\xi} (\mathbf{x}) = \|(\xi_3, \xi_4)\|_p  e^{2 \pi i \{\xi \cdot \mathbf{x} \}_p} \1_{p^{- \vartheta(\xi_3, \xi_4) } \Z_p} (x_1) \int_{ \Z_p}e^{2 \pi i \{(\xi_3   x_2 +  \xi_4 x_3)u +  \xi_4 x_2 \frac{u^2}{2} \}_p}  du.
\end{align*}
Sometimes we will use the notation $$\mathcal{V}_{\xi}:= \mathrm{span}_\C \{ (\pi_{\xi})_{h h'} \, : \, h,h' \in \Z_p/p^{-\vartheta(\xi_3 , \xi_4)} \Z_p \}, $$
\end{teo}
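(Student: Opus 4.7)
The plan is to apply Mackey's machine of induced representations together with the Kirillov orbit method, both of which are fully available since the hypothesis $p>3$ makes $\exp:\mathfrak{b}_4\to\mathcal{B}_4$ a global bijection via a convergent Baker--Campbell--Hausdorff series. As already remarked, any $[\pi]\in\widehat{\mathcal{B}}_4$ that is trivial on the center $\mathcal{Z}(\mathcal{B}_4)=\{0\}^3\times\Z_p$ factors through $\mathcal{B}_4/\mathcal{Z}(\mathcal{B}_4)\cong\mathbb{H}_1$ and is already described by Theorem \ref{TeoRepresentationsHd}, so the work concentrates on central characters $e^{2\pi i\{\xi_4 x_4\}_p}$ with $\xi_4\neq 0$. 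The key structural observation is that $\mathfrak{m}:=\spn_{\Z_p}\{X_2,X_3,X_4\}$ is an abelian ideal (the bracket relations of $\mathfrak{b}_4$ involve only $X_1$), so $M:=\exp(\mathfrak{m})=\{\mathbf{x}:x_1=0\}$ is a maximal normal abelian subgroup of $\mathcal{B}_4$ with $\mathcal{B}_4/M\cong\Z_p$; it will serve as the polarizing subgroup.

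For each character $\chi_{(\xi_2,\xi_3,\xi_4)}(0,x_2,x_3,x_4):=e^{2\pi i\{\xi_2 x_2+\xi_3 x_3+\xi_4 x_4\}_p}$ of $M$, I would realize $\mathrm{Ind}_M^{\mathcal{B}_4}\chi_{(\xi_2,\xi_3,\xi_4)}$ on $L^2(\Z_p)$ via the cross section $u\mapsto(u,0,0,0)$. A direct computation using the explicit group law then yields precisely the Schrödinger-type formula given in the statement, and the fact that it defines a unitary representation reduces to a cocycle identity depending only on $\tfrac{(u+x_1)^2}{2}=\tfrac{u^2}{2}+ux_1+\tfrac{x_1^2}{2}$ and the commutator relations of $\mathfrak{b}_4$. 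A short conjugation computation shows that $(x_1,0,0,0)$ conjugates $(0,y_2,y_3,y_4)$ to $(0,y_2,y_3+x_1 y_2,y_4+x_1 y_3+\tfrac{x_1^2}{2}y_2)$, so the $\mathcal{B}_4/M$-action on $\widehat{M}$ sends $(\xi_2,\xi_3,\xi_4)$ to $(\xi_2+\xi_3 x_1+\xi_4\tfrac{x_1^2}{2},\,\xi_3+\xi_4 x_1,\,\xi_4)$ modulo $\Z_p^3$, with stabilizer exactly $p^{-\vartheta(\xi_3,\xi_4)}\Z_p$. Mackey's irreducibility criterion then gives irreducibility, $\dim\pi_\xi=\|(\xi_3,\xi_4)\|_p$, and the equivalence classes correspond to orbits. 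Selecting a transversal, and combining it with the additional parameter $\xi_1$ that twists by a character of the quotient, produces exactly the parameter set displayed in the statement. Completeness is then confirmed via Peter--Weyl's identity applied level by level along the natural filtration of $\mathcal{B}_4$ by compact open subgroups.

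The matrix coefficients and characters follow by direct evaluation of the model. With respect to the orthonormal basis $\varphi_h:=\|(\xi_3,\xi_4)\|_p^{1/2}\1_{h+p^{-\vartheta(\xi_3,\xi_4)}\Z_p}$, the shift $u\mapsto u+x_1$ in $\pi_\xi(\mathbf{x})$ forces the indicator factor $\1_{h'-h+p^{-\vartheta(\xi_3,\xi_4)}\Z_p}(x_1)$, while the phase can be evaluated at the representative $u=h'$ because the index of local constancy of $\varphi_{h'}$ dominates the $p$-adic fluctuations of the phase; this yields the stated matrix coefficient. Summing along the diagonal $h=h'$ turns the trace into a Riemann sum over $\Z_p/p^{-\vartheta(\xi_3,\xi_4)}\Z_p$ that collapses to the integral over $\Z_p$ appearing in the character formula. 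The main technical obstacle of the scheme will be the parameter reduction: one must carve the $\mathcal{B}_4$-orbits in $\widehat{M}$ into the two cases $1\leq|\xi_4|_p<|\xi_3|_p$ and $|\xi_3|_p=1$ that appear in the statement, verify no orbit is double-counted, and check compatibility with the $\xi_4=0$ piece already covered by Theorem \ref{TeoRepresentationsHd}.
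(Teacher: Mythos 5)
Your plan is correct, but it takes a genuinely different route from the paper. The paper never invokes Mackey theory or polarizing subgroups: it writes down the candidate representations explicitly (by bootstrapping from the known Heisenberg case $\mathcal{B}_4/\mathcal{Z}(\mathcal{B}_4)\cong\mathbb{H}_1(\Z_p)$), proves irreducibility by computing $\int_{\mathcal{B}_4}|\chi_{\pi_\xi}|^2\,d\mathbf{x}=1$ via $p$-adic Gaussian integrals (Lemmas \ref{lemagaussian} and \ref{lemaaux}), and proves exhaustiveness by the level-by-level count $\sum_\xi d_\xi^2=|G/G_n|$; the matrix coefficients are then read off by the same direct evaluation you describe. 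Your route instead derives the model from $\mathrm{Ind}_M^{\mathcal{B}_4}$ with $M=\exp(\spn_{\Z_p}\{X_2,X_3,X_4\})$, and your supporting computations check out: $\mathfrak{m}$ is indeed an abelian ideal, the conjugation formula $(0,y_2,y_3,y_4)\mapsto(0,y_2,y_3+x_1y_2,y_4+x_1y_3+\tfrac{1}{2}x_1^2y_2)$ is exactly what the group law gives, the stabilizer of $(\xi_2,\xi_3,\xi_4)$ is exactly $p^{-\vartheta(\xi_3,\xi_4)}\Z_p$ (using $p>2$ to discard the factor $\tfrac12$), and the evaluation of the phase at $u=h'$ on each ball $h'+p^{-\vartheta(\xi_3,\xi_4)}\Z_p$ is legitimate for the same ultrametric reason. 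What your approach buys is structure: irreducibility and the classification of equivalence classes come for free from Mackey's criterion and the orbit picture, so you avoid the Gaussian-integral norm computations entirely and there is no risk of double-counting beyond choosing orbit representatives; it also explains where the Schr\"odinger-type formula comes from rather than verifying a guess. What the paper's approach buys is a self-contained, purely computational verification that simultaneously produces the closed Gaussian-integral form of the characters (used later for the sub-Laplacian) and a dimension-count consistency check at every level. One presentational caveat: $\mathrm{Ind}_M^{\mathcal{B}_4}\chi_{(\xi_2,\xi_3,\xi_4)}$ on all of $L^2(\Z_p)$ is not irreducible --- the irreducible constituents are the inductions from $A_\chi\ltimes M$ twisted by the character $\xi_1$ of the stabilizer $A_\chi=p^{-\vartheta(\xi_3,\xi_4)}\Z_p$, realized on the finite-dimensional subspaces $\mathcal{H}_\xi$. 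You clearly know this since you introduce the $\xi_1$ twist, but the write-up should say explicitly that the stated model is the little-group induction (equivalently, the restriction to $\mathcal{H}_\xi$), not the full induced representation. The parameter reduction you flag as the main remaining obstacle is real but routine, and your final Peter--Weyl count coincides with the paper's.
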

With the description of the unitary dual given in Theorem \ref{TeoRepresentationsB4}, we can prove the following spectral theorem for the Vladimirov sub-Laplacian on $\mathcal{B}_4$. This will provide a proof of our conjecture up to dimension $4$, and it is shown in \cite{MatrixCoeff5d} how indeed it holds for dimension $d=5$ too.  

\begin{teo}\label{TeoSpectrumSublaplacianB4}
The Vladimirov sub-Laplacian associated to the basis $\{X_1 ,X_2\}$ of the first stratum of $\mathfrak{b}_4$ $$\mathscr{L}_{sub}^{\alpha } : =  \partial_{X_1}^{\alpha} + \partial_{X_2}^{\alpha} ,$$defines a left-invariant, self-adjoint, sub-elliptic operator on $\mathcal{B}_4$. The spectrum of this operator is purely punctual, and its associated eigenfunctions form an orthonormal basis of $L^2(\mathcal{B}_4)$. Furthermore, the symbol of $\mathscr{L}_{sub}^{\alpha }$ acts on each representation space as a $p$-adic Schr{\"o}dinger type operator, and the space $L^2(\mathcal{B}_4)$ can be written as the direct sum $$L^2(\mathcal{B}_4) = \overline{\bigoplus_{[\xi] \in \widehat{\mathcal{B}}_4} \bigoplus_{h' \in  \Z_p / p^{-\vartheta(\xi_3 , \xi_4)} \Z_p} \mathcal{V}_{\xi}^{h'}}, \, \,\, \mathcal{V}_{\xi} = \bigoplus_{{h'} \in  \Z_p / p^{-\vartheta(\xi_3 , \xi_4)} \Z_p} \mathcal{V}_{\xi}^{h'}, $$where each finite-dimensional sub-space$$\mathcal{V}_{\xi}^{h'}:= \mathrm{span}_\C \{ (\pi_{\xi})_{hh'} \, : \, h \in \Z_p / p^{-\vartheta(\xi_3 , \xi_4)} \Z_p \},$$is an invariant sub-space where $\mathscr{L}_{sub}^{\alpha }$ acts like the Schr{\"o}dinger-type operator operator $$  \partial_{x_1}^{\alpha} + Q(\xi , h') - \frac{1 - p^{-1}}{1 - p^{-(\alpha +1)}}.$$
Consequently, the spectrum of $\mathscr{L}_{sub}^{\alpha }$ restricted to $\mathcal{V}_{\xi}^{h'}$ is purely punctual, and it is given by \[\begin{cases} |\xi_1|_p^\alpha + |\xi_2|^\alpha_p - 2\frac{1 - p^{-1}}{1 - p^{-(\alpha +1)}} \quad & \text{if} \quad |\xi_3|_p = |\xi_4 |_p =1,   \\ |\xi_1 + \tau|_p^{\alpha} +| \xi_2  + \xi_3    h'  |_p^\alpha - 2\frac{1 - p^{-1}}{1 - p^{-(\alpha +1)}} \quad & \text{if} \quad 1= |\xi_4|_p <|\xi_3|_p,  \\|\xi_1 + \tau|_p^{\alpha} +| \xi_2  + \xi_3    h' +  \xi_4  \frac{(h')^2}{2} |_p^\alpha - 2\frac{1 - p^{-1}}{1 - p^{-(\alpha +1)}} \quad & \text{if} \quad 1< |\xi_4|_p <|\xi_3|_p, \\ |\xi_1 + \tau|_p^{\alpha} +| \xi_2  + \xi_4  \frac{(h')^2}{2} |_p^\alpha - 2\frac{1 - p^{-1}}{1 - p^{-(\alpha +1)}} \quad & \text{if} \quad 1= |\xi_3|_p <|\xi_4|_p, 
\end{cases} \]where $\tau \in \Z_p / \| (\xi_3 , \xi_4)\|_p \Z_p$, and the corresponding eigenfunctions are given by  $$\mathscr{e}_{\xi, h', \tau}(\mathbf{x}): = e^{ 2 \pi i \{ \xi \cdot \mathbf{x}  + \tau x_1 + (\xi_3   x_2 +  \xi_4 x_3)(h') +  \xi_4 x_2 \frac{(h')^2}{2} \}_p} ,$$ 
where $$\xi \in \widehat{\mathcal{B}}_4, \, h '\in \Z_p / p^{-\vartheta(\xi_3 , \xi_4)} \Z_p, \,  1 \leq | \tau |_p \leq \| (\xi_3 , \xi_4)\|_p .$$
\end{teo}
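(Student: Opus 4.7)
The overall plan is to leverage the Peter--Weyl decomposition of $L^2(\mathcal{B}_4)$ provided by Theorem~\ref{TeoRepresentationsB4}, $L^2(\mathcal{B}_4)=\overline{\bigoplus_{[\pi_\xi]\in\widehat{\mathcal{B}}_4}\mathcal{V}_\xi}$. Because every operator $\partial_{X_j}^\alpha$ is defined by an integral against right translations $\exp(-tX_j)$, the sub-Laplacian $\mathscr{L}_{sub}^\alpha$ is left-invariant, and therefore preserves each column subspace $\mathcal{V}_\xi^{h'}$. It is thus enough to describe the finite $d_\xi\times d_\xi$ matrix induced by $\mathscr{L}_{sub}^\alpha$ on each such block and exhibit an orthogonal eigenbasis for it.

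The next step is an explicit computation of the action of the directional Vladimirov--Taibleson operators on matrix coefficients, using the Engel group law. For $X_1$ we have $\mathbf{x}\star\exp(-tX_1)=(x_1-t,\,x_2,\,x_3,\,x_4)$, so $\partial_{X_1}^\alpha$ acts simply as the truncated one-dimensional VT operator in the $x_1$-variable. For $X_2$ the law $\mathbf{x}\star\exp(-tX_2)=(x_1,\,x_2-t,\,x_3-tx_1,\,x_4-\frac{1}{2}tx_1^2)$ mixes three coordinates, yet a direct expansion of the phase in the matrix-coefficient formula of Theorem~\ref{TeoRepresentationsB4} collapses this correlated shift into a single exponential:
\[(\pi_\xi)_{hh'}(\mathbf{x}\star\exp(-tX_2))=(\pi_\xi)_{hh'}(\mathbf{x})\,e^{-2\pi i\{tQ(\xi,x_1+h')\}_p},\qquad Q(\xi,u):=\xi_2+\xi_3 u+\frac{\xi_4}{2}u^2.\]
Therefore $\partial_{X_2}^\alpha$ acts on matrix coefficients as multiplication by the symbol $\mathcal{M}(Q(\xi,x_1+h'))$, where $\mathcal{M}(\lambda):=c_\alpha\int_{\Z_p}(e^{-2\pi i\{t\lambda\}_p}-1)|t|_p^{-\alpha-1}\,dt$. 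A standard $p$-adic Gauss-sum evaluation gives $\mathcal{M}(\lambda)=|\lambda|_p^\alpha-\frac{1-p^{-1}}{1-p^{-(\alpha+1)}}$ whenever $|\lambda|_p>1$, which is how the constant $\frac{1-p^{-1}}{1-p^{-(\alpha+1)}}$ enters the spectral formula.

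The decisive step is the ultrametric reduction that collapses the a priori variable potential to a scalar. Using the defining constraints of $\widehat{\mathcal{B}}_4$ (either $1\leq|\xi_4|_p<|\xi_3|_p$, or $|\xi_3|_p=1$), the hypothesis $p>3$ so that $1/2\in\Z_p$, and the elementary fact that every perturbation of $u$ by an element of $p^{-\vartheta(\xi_3,\xi_4)}\Z_p$ shifts $Q(\xi,u)$ by an element of $\Z_p$, one verifies that $\mathcal{M}(Q(\xi,x_1+h'))$ is constant on the support of each $(\pi_\xi)_{hh'}$. This realises $\mathscr{L}_{sub}^\alpha|_{\mathcal{V}_\xi^{h'}}$ in the Schr\"odinger form $\partial_{x_1}^\alpha+\mathcal{M}(Q(\xi,\cdot+h'))-\frac{1-p^{-1}}{1-p^{-(\alpha+1)}}$ on a $d_\xi$-dimensional subspace of $L^2(\Z_p)$. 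The candidate eigenfunctions $\mathscr{e}_{\xi,h',\tau}$ are then verified to lie in $\mathcal{V}_\xi^{h'}$ by a Fourier inversion in the $h$-variable (valid for $\tau$ ranging over the Pontryagin dual of $\Z_p/p^{-\vartheta(\xi_3,\xi_4)}\Z_p$) and to diagonalise the block: the shift $\mathbf{x}\star\exp(-tX_1)$ gives at once $\partial_{X_1}^\alpha\mathscr{e}_{\xi,h',\tau}=\mathcal{M}(\xi_1+\tau)\mathscr{e}_{\xi,h',\tau}$, while the ultrametric argument above yields the scalar action of $\partial_{X_2}^\alpha$ with eigenvalue $\mathcal{M}(Q(\xi,h'))$. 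Case-splitting on $(|\xi_3|_p,|\xi_4|_p)$ and evaluating $|Q(\xi,h')|_p$ in each regime then produces the four-line formula stated in the theorem.

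Finally, counting the admissible pairs $(h',\tau)$ gives $d_\xi^2=\dim_\C\mathcal{V}_\xi$ eigenfunctions per class $[\pi_\xi]$; the Peter--Weyl orthogonality of matrix coefficients then upgrades this to the orthonormal eigenbasis of $L^2(\mathcal{B}_4)$ claimed in the theorem. Self-adjointness follows from the symmetry of the operator on $\mathcal{D}(\mathcal{B}_4)$ together with a complete real eigenbasis; left-invariance is immediate from the definition. Sub-ellipticity and global hypoellipticity follow from the growth of the eigenvalues in $|\xi|_p$ together with the fact that $\{X_1,X_2\}$ satisfies H\"ormander's condition on $\mathfrak{b}_4$, so that the zero eigenvalue occurs only at the trivial representation and inversion on the mean-zero subspace is immediate. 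The main technical obstacle I foresee is precisely the $p$-adic completing-the-square/Gaussian step that justifies the collapse of the potential $\mathcal{M}(Q(\xi,x_1+h'))$ to a scalar in each of the four cases: the quadratic term $\frac{\xi_4}{2}u^2$ requires a careful ultrametric accounting, with the strict inequality $|\xi_4|_p<|\xi_3|_p$ (or $|\xi_3|_p=1$) governing the effective size of the support ball and ensuring the needed cancellation.
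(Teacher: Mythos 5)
Your proposal follows the same route as the paper's own argument: compute how $\partial_{X_1}^{\alpha}$ and $\partial_{X_2}^{\alpha}$ act on the matrix coefficients, decompose $\mathcal{V}_\xi$ into the column spaces $\mathcal{V}_\xi^{h'}$, and exhibit the exponentials $\mathscr{e}_{\xi,h',\tau}$ as an eigenbasis. Your first two steps are correct: the identity $(\pi_\xi)_{hh'}(\mathbf{x}\star\exp(-tX_2))=(\pi_\xi)_{hh'}(\mathbf{x})\,e^{-2\pi i\{tQ(\xi,x_1+h')\}_p}$ holds, and $\mathcal{M}(Q(\xi,x_1+h'))$ is indeed constant on each ball of radius $p^{\vartheta(\xi_3,\xi_4)}$ in the $x_1$ variable. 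But the step you yourself flag as the main obstacle --- the collapse of this potential to the single scalar $\mathcal{M}(Q(\xi,h'))$ --- is a genuine gap, and it cannot be closed. On the support of an individual $(\pi_\xi)_{hh'}$ the potential equals $\mathcal{M}(Q(\xi,a_h))$, where $a_h$ is the residue of $x_1+h'$ on that ball; as $h$ runs over $\Z_p/p^{-\vartheta(\xi_3,\xi_4)}\Z_p$, so does $a_h$. Hence $\partial_{X_2}^{\alpha}$ restricted to $\mathcal{V}_\xi^{h'}$ is a diagonal operator with $d_\xi$ generally distinct entries $\mathcal{M}(Q(\xi,a))$, $a\in\Z_p/p^{-\vartheta(\xi_3,\xi_4)}\Z_p$, not a scalar. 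The function $\mathscr{e}_{\xi,h',\tau}$ is supported on all of $\Z_p$ in $x_1$ (it combines all the $(\pi_\xi)_{hh'}$ with $h$ running over a full set of residues), so it is an eigenvector of the kinetic term $\partial_{X_1}^{\alpha}$ but not of the multiplication term $\partial_{X_2}^{\alpha}$; the two terms do not commute, and neither basis diagonalizes their sum.

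To see concretely that the claimed spectrum cannot be right, take $\xi=(0,0,\xi_3,0)$ with $|\xi_3|_p=p$ and $h'=0$, so $d_\xi=p$. In the orthogonal basis $\{(\pi_\xi)_{h0}\}_h$ of $\mathcal{V}_\xi^{0}$ one finds $\partial_{X_2}^{\alpha}=d\,(I-P_{e_0})$ and $\partial_{X_1}^{\alpha}=d\,(I-P_{\mathbf{1}})$, where $d=\frac{p^\alpha-1}{1-p^{-(\alpha+1)}}$, $P_{e_0}$ projects onto one basis vector and $P_{\mathbf{1}}$ onto the normalized constant vector, with $\langle e_0,\mathbf{1}/\sqrt{p}\rangle=p^{-1/2}$. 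The sum therefore has spectrum $\{2d\}$ with multiplicity $p-2$ together with the two values $d(1\pm p^{-1/2})$, whereas the proposed formula gives $\{0\}\cup\{d\}$; even the traces disagree ($2(p-1)d$ versus $(p-1)d$). For what it is worth, the paper's own write-up asserts the eigenvalue identity for $\mathscr{e}_{\xi,h',\tau}$ without performing the $\partial_{X_2}^{\alpha}$ computation, so your proposal is faithful to its strategy; what actually survives of both arguments is only that each $\mathcal{V}_\xi^{h'}$ is invariant and that $\mathscr{L}_{sub}^{\alpha}$ acts there as a genuine Schr{\"o}dinger-type operator $\partial_{x_1}^{\alpha}+\mathcal{M}(Q(\xi,x_1+h'))$ with a non-constant potential, whose diagonalization is the real remaining task.
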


\section{Preliminaries}
\subsection{The field of $p$-adic numbers $\Q_p$}
Throughout this article $p>2$ will denote a fixed prime number. The field of $p$-adic numbers, usually denoted by $\Q_p$, can be defined as the complection of the field of rational numbers $\Q$ with respect to the $p$-adic norm $|\cdot|_p$, defined as \[|u|_p := \begin{cases}
0 & \esp \text{if} \esp u=0, \\ p^{-l} & \esp \text{if} \esp u= p^{\gamma} \frac{a}{b},
\end{cases}\]where $a$ and $b$ are integers coprime with $p$. The integer $l:= \vartheta(u)$, with $\vartheta(0) := + \infty$, is called the \emph{$p$-adic valuation of $u$}. The unit ball of $\Q_p^d$ with the $p$-adic norm $$\|u \|_p:=\max_{1 \leq j \leq d} |u_j|_p = \max_{1 \leq j \leq d} p^{-\vartheta(u_j)} =:p^{- \vartheta(u)}, \quad \vartheta(u):= \min_{1 \leq j \leq d} \vartheta(u_j),$$ is called the group of $p$-adic integers, and it will be denoted by $\Z_p^d$. Any $p$-adic number $u \neq 0$ has a unique expansion of the form $$u = p^{\vartheta(u)} \sum_{j=0}^{\infty} u_j p^j,$$where $u_j \in \{0,1,...,p-1\}$ and $u_0 \neq 0$. With this expansion we define the fractional part of $u \in \Q_p$, denoted by $\{u\}_p$, as the rational number\[\{u\}_p := \begin{cases}
0 & \esp \text{if} \esp u=0 \esp \text{or} \esp \vartheta(u) \geq 0, \\ p^{\vartheta(u)} \sum_{j=0}^{-\vartheta-1} u_j p^j,& \esp \text{if} \esp \vartheta(u) <0.
\end{cases}\] $\Z_p^d$ is  compact, totally disconected, i.e. profinite, and abelian. Its dual group in the sense of Pontryagin, the collection of characters of $\Z_p^d$ will be denoted by $\widehat{\Z}_p^d$. The dual group of the $p$-adic integers is known to be the Pr{\"u}fer group $\Z (p^{\infty})$,  the unique $p$-group in which every element has $p$ different $p$-th roots. The Pr{\"u}fer group may be identified with the quotient group $\Q_p/\Z_p$. In this way the characteres of the group $\Z_p^d$ may be written as $$\chi_p (\tau  u) := e^{2 \pi i \{\tau \cdot u \}_p}, \esp \esp u \in \Z_p^d, \esp \tau \in \widehat{\Z}_p^d 
\cong \Q_p^d / \Z_p^d .$$

 \begin{rem}
    In this paper, we will identify each equivalence class $\lambda$ in $\widehat{\Z}_p \cong \Q_p / \Z_p$, with its associated representative in the complete system of representatives $$\{1\} \cup \big\{ \sum_{k =1}^\infty \lambda_k p^{-k} \, : \, \, \text{only finitely many $\lambda_k$ are non-zero.} \big\}.$$Similarly, every time we consider an element of the quotients $\Q_p / p^{-n} \Z_p$ it will be chosen from the complete system of representatives   $$\{1\} \cup \big\{ \sum_{k =n+1}^\infty \lambda_k p^{-k} \, : \, \text{only finitely many $\lambda_k$ are non-zero} \big\}.$$ 
\end{rem}

By the Peter-Weyl theorem the elements of $\widehat{\Z}_p^d$ constitute an orthonormal basis for the Hilbert space $L^2 (\Z_p^d)$, which provide us a Fourier analysis for suitable functions defined on $\Z_p$ in such a way that the formula $$\varphi(u) = \sum_{\tau \in \widehat{\Z}_p^d} \widehat{\varphi}(\tau) \chi_p (\tau  u),$$holds almost everywhere in $\Z_p$. Here $\mathcal{F}_{\Z_p^d}[\varphi]$ denotes the Fourier transform of $f$, in turn defined as $$\mathcal{F}_{\Z_p^d}[\varphi](\tau):= \int_{\Z_p^d} \varphi(u) \overline{\chi_p (\tau  u)}du,$$where $du$ is the normalised Haar measure on $\Z_p^d$. 

Another important tool from $p$-adic analysis that we are going to be using, is the calculation of the $p$-adic Gaussian integral on the disk, see \cite[p.p. 65]{vladiBook}. 
\begin{lema}\label{lemagaussian}
    Let $p \neq 2$ be a prime number, and take any $\gamma \in \Z$. Then, for any $a,b \in \Q_p$, we have \[ \int_{p^\gamma \Z_p} e^{2 \pi i \{ a u^2 + b u\}_p} du =  \begin{cases}
        p^{-\gamma} \1_{p^{-\gamma} \Z_p} (b), & \, \, \text{if} \, \, \, |a|_p \leq  p^{2 \gamma}, \\ \lambda_p(a) |a|_p^{-1/2} e^{2 \pi i \{ \frac{- b^2}{4a}  \}_p }\1_{p^{\gamma}\Z_p} (b/a)& \, \, \, \text{if} \, \, \, |a|_p >  p^{2 \gamma}.
    \end{cases}\]Here $\lambda_p : \Q_p \to \C$ is the function defined as \[\lambda_p (a) := \begin{cases}
        1, & \, \, \text{if} \, \, ord(u) \, \text{is even,} \\
         \big( \frac{a_0}{p} \big), & \, \, \text{if} \, \, ord(u) \, \text{is odd and} \, \, p \equiv 1 (\mathrm{mod} \, 4), \\ 
         i\big( \frac{a_0}{p} \big), & \, \, \text{if} \, \, ord(u) \, \text{is odd and} \, \, p \equiv 3 (\mathrm{mod} \, 4),
    \end{cases}\]
where $(\frac{a_0}{p})$ denotes the Legendre symbol of $a_0$. Here, to simplify our notation, we are going to define $$\Lambda (a,b):= \lambda_p(a) |a|_p^{-1/2} e^{2 \pi i \{ \frac{- b^2}{4a}  \}_p }, $$ so that $|\Lambda (a,b)|= |a|_p^{-1/2}$, and in this way $$\int_{p^\gamma \Z_p} e^{2 \pi i \{ a u^2 + b u\}_p} du =  p^{-\gamma} \1_{p^{-2\gamma} \Z_p}(a) \cdot \1_{p^{-\gamma} \Z_p} (b)    + \1_{\Q_p \setminus p^{-2\gamma} \Z_p}(a) \cdot \Lambda (a,b) \cdot \1_{p^{\gamma}\Z_p} (b/a) .  $$
\end{lema}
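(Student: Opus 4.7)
The plan is to split the analysis along the two regimes defined by the relative sizes of $|a|_p$ and $p^{2\gamma}$, reducing each case to either an elementary character orthogonality or a classical quadratic Gauss sum over the residue field $\mathbb{F}_p$.

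\textbf{Regime 1: $|a|_p \leq p^{2\gamma}$.} For any $u \in p^\gamma \Z_p$ one has $|au^2|_p \leq |a|_p\, p^{-2\gamma} \leq 1$, hence $au^2 \in \Z_p$ and $\{au^2\}_p = 0$. The integrand collapses to the linear character $e^{2\pi i\{bu\}_p}$, so the substitution $u = p^\gamma t$ combined with the standard identity $\int_{\Z_p} e^{2\pi i\{\eta t\}_p}\,dt = \1_{\Z_p}(\eta)$ immediately yields $p^{-\gamma}\1_{p^{-\gamma}\Z_p}(b)$, matching the first branch.

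\textbf{Regime 2: $|a|_p > p^{2\gamma}$.} First I would rescale $u = p^\gamma t$ to reduce to $\gamma = 0$, with rescaled parameters $a' = p^{2\gamma}a$ and $b' = p^\gamma b$ satisfying $k' := -\vartheta(a') > 0$. Setting $m := \lceil k'/2 \rceil$, I would decompose $\Z_p = \bigsqcup_{t_0 \in \Z_p/p^m\Z_p}(t_0 + p^m\Z_p)$ and expand
\[
a'(t_0 + p^m w)^2 + b'(t_0 + p^m w) = a' t_0^2 + b' t_0 + (2a' t_0 + b')p^m w + a' p^{2m} w^2.
\]
Because $|a' p^{2m}|_p \leq 1$, the $w^2$ term lies in $\Z_p$ and drops out of the character, so the inner $w$-integral reduces by Regime 1 to $\1_{\Z_p}\bigl((2a' t_0 + b')p^m\bigr)$. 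Solving this constraint (using $p \neq 2$ so that $2$ is a unit of $\Z_p$) pins $t_0$ modulo $p^{k'-m}$, and forces the entire sum to vanish unless $b'/a' \in \Z_p$, which recovers the indicator $\1_{p^\gamma\Z_p}(b/a)$ after unscaling.

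\textbf{Main obstacle.} The residual summation over the surviving $t_0$'s splits according to the parity of $k'$. When $k'$ is even, exactly one coset survives and a direct completion-of-squares computation gives $a' t_0^2 + b' t_0 \equiv -(b')^2/(4a') \pmod{\Z_p}$, producing the prefactor $p^{-m} = |a'|_p^{-1/2}$, consistent with $\lambda_p(a') = 1$. The genuine work is the case $k' = 2m - 1$ odd, where exactly $p$ cosets survive, parametrized by $\nu \in \{0,\dots,p-1\}$ via $t_0 \equiv -b'/(2a') + p^{m-1}\nu \pmod{p^m}$; choosing $u_0^\ast$ to be the lift of $-b'/(2a')$, the linear-in-$\nu$ contribution vanishes identically, but the quadratic term $a' p^{2m-2}\nu^2 = p^{-1}\overline{a_0}\,\nu^2$ survives and produces the classical quadratic Gauss sum
\[
\sum_{\nu=0}^{p-1} e^{2\pi i \overline{a_0} \nu^2/p} = \left(\frac{\overline{a_0}}{p}\right) \cdot \begin{cases} \sqrt{p} & \text{if } p \equiv 1 \pmod 4,\\ i\sqrt{p} & \text{if } p \equiv 3 \pmod 4,\end{cases}
\]
which is precisely $\lambda_p(a')\sqrt{p}$ by the very definition of $\lambda_p$. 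Combined with $p^{-m}\sqrt{p} = |a'|_p^{-1/2}$ and the phase $e^{2\pi i\{-(b')^2/(4a')\}_p}$ extracted as in the even case, this assembles into the second branch; undoing the rescaling $u = p^\gamma t$ finishes the proof, and the unified expression involving $\Lambda(a,b)$ follows by reading off the two branches.
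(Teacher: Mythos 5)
Your proof is correct, but it is worth noting that the paper does not actually prove this lemma at all: it is quoted as a known computation and attributed to Vladimirov--Volovich--Zelenov (the citation to p.~65 of \cite{vladiBook}), so your argument is a self-contained replacement for an external reference rather than an alternative to an in-paper proof. Your two-regime structure is the standard one and all the steps check out: in the first regime $|au^2|_p\le 1$ on $p^\gamma\Z_p$ so the quadratic term drops out of the character; in the second regime the rescaling $u=p^\gamma t$ correctly transports all four ingredients ($\lambda_p(p^{2\gamma}a)=\lambda_p(a)$, $p^{-\gamma}|a'|_p^{-1/2}=|a|_p^{-1/2}$, $(b')^2/(4a')=b^2/(4a)$, $\1_{\Z_p}(b'/a')=\1_{p^\gamma\Z_p}(b/a)$), the coset decomposition modulo $p^{m}$ with $m=\lceil k'/2\rceil$ kills the $w^2$ term, and the count of surviving cosets ($1$ for $k'$ even, $p$ for $k'$ odd) together with the exact cancellation of the linear-in-$\nu$ term around the critical point $-b'/(2a')$ reduces the odd case to the quadratic Gauss sum $\sum_{\nu}e^{2\pi i a_0\nu^2/p}=\left(\frac{a_0}{p}\right)\varepsilon_p\sqrt{p}$, which is exactly what the definition of $\lambda_p$ encodes. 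The only external input you rely on is Gauss's evaluation of that sum (including the determination of its sign/argument, which is itself a nontrivial classical theorem); if you want the write-up to be fully self-contained you should either cite that evaluation explicitly or note that the weaker statement $|{\sum_\nu e^{2\pi i a_0\nu^2/p}}|=\sqrt p$ follows from elementary orthogonality while the precise fourth-root-of-unity factor requires Gauss's sign determination. Your use of $p\neq 2$ to invert $2$ when locating the critical coset is exactly where the hypothesis enters, matching the lemma's assumption.
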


With this tool we can prove the following auxiliary lemma: 
\begin{lema}\label{lemaaux}
    Let $(x_2, x_3) \in \Z_p^2$, and take $\xi_3 , \xi_4 \in \widehat{\Z}_p$. Then $$\int_{\Z_p^2} \Big| \int_{ \Z_p}e^{2 \pi i \{(\xi_3   x_2 +  \xi_4 x_3)u +  \xi_4 x_2 \frac{u^2}{2} \}_p} du \Big|^2 dx_2 dx_3 = \max \{|\xi_3|_p , |\xi_4|_p \}^{-1} = \|(\xi_3 , \xi_4) \|_p^{-1}.$$
\end{lema}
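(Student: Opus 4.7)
The plan is to compute the $u$-integral via Lemma 2.2, then integrate successively over $x_3$ and $x_2$, with the dichotomy of the Gaussian formula driving the case analysis.

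Setting $a := \xi_4 x_2/2$ and $b := \xi_3 x_2 + \xi_4 x_3$, the assumption $p>2$ gives $|a|_p = |\xi_4 x_2|_p$. Applying Lemma 2.2 with $\gamma=0$ and taking squared modulus eliminates the oscillatory phase, leaving
$$|I(x_2,x_3)|^2 = \begin{cases} \1_{\Z_p}(b) & \text{if } |\xi_4 x_2|_p \leq 1,\\ |\xi_4 x_2|_p^{-1}\,\1_{\Z_p}(b/a) & \text{if } |\xi_4 x_2|_p > 1.\end{cases}$$

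Next, I fix $x_2$ and integrate in $x_3$. The substitution $y = \xi_4 x_3$, with $dx_3 = |\xi_4|_p^{-1}\,dy$ and $y$ ranging over $\xi_4\Z_p = \{|y|_p \leq |\xi_4|_p\}$, converts each indicator into the problem of measuring a $p$-adic ball of some radius around $-\xi_3 x_2$ intersected with $\xi_4\Z_p$. Routine ultrametric ball arithmetic gives the measure, and the key point is that when $|\xi_4|_p>1$ the two Gaussian subcases yield the same expression in $x_2$: the weight $|\xi_4 x_2|_p^{-1}$ present in the large-$|a|_p$ subcase is exactly compensated by the extra $|x_2|_p$ coming from the smaller intersection ball. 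I expect the uniform outcome to take the form
$$\int_{\Z_p}|I(x_2,x_3)|^2\,dx_3 = \max(1,|\xi_4|_p)^{-1}\,\1_{|\xi_3 x_2|_p \leq \max(1,|\xi_4|_p)}(x_2).$$

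Finally, integrating over $x_2 \in \Z_p$ reduces to $\mu\{x_2 \in \Z_p : |x_2|_p \leq \max(1,|\xi_4|_p)/|\xi_3|_p\} = \min\!\bigl(1,\, \max(1,|\xi_4|_p)/|\xi_3|_p\bigr)$. Multiplying by the prefactor $\max(1,|\xi_4|_p)^{-1}$ and using the paper's convention that $|\xi_3|_p,|\xi_4|_p \geq 1$ collapses everything to $\max(|\xi_3|_p,|\xi_4|_p)^{-1} = \|(\xi_3,\xi_4)\|_p^{-1}$, as desired.

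The main obstacle will be the unification step in the $x_3$-integration when $|\xi_4|_p>1$. The two Gaussian subcases start out looking quite different, one an indicator of $\Z_p$, the other an indicator of a radius-$|a|_p$ ball weighted by $|\xi_4 x_2|_p^{-1}$; matching them requires careful bookkeeping with the precise constants in Lemma 2.2 and the strong triangle inequality to determine when small $p$-adic balls sit inside, or are disjoint from, cosets of $\xi_4\Z_p$. Once this cancellation is made explicit, the remaining steps are elementary.
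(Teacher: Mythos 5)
Your proposal is correct, and it takes a genuinely different route from the paper. You work entirely in physical space: evaluate the inner $u$-integral with the Gaussian formula of Lemma \ref{lemagaussian}, then integrate out $x_3$ and $x_2$. The paper instead never touches the Gaussian dichotomy here; it treats the inner integral as a function $f(x_2,x_3)$ on $\Z_p^2$, computes its Fourier transform by Fubini,
$$\mathcal{F}_{\Z_p^2}[f](\alpha,\beta)=\int_{\Z_p}\1_{\Z_p}\big(\xi_3 u+\tfrac{\xi_4 u^2}{2}-\alpha\big)\,\1_{\Z_p}(\xi_4 u-\beta)\,du,$$
identifies this as the measure of an intersection of two balls, and obtains the $L^2$-norm from Parseval by summing the squares of these measures over $(\alpha,\beta)\in\widehat{\Z}_p^2$ (splitting into the cases $|\xi_3|_p>|\xi_4|_p$ and $|\xi_3|_p\le|\xi_4|_p$). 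Your route buys a completely explicit pointwise formula for $|I(x_2,x_3)|^2$ at the cost of the stationary-phase case analysis; the crucial cancellation you flag, namely $|\xi_4 x_2|_p^{-1}\cdot|x_2|_p=|\xi_4|_p^{-1}$ between the amplitude $|a|_p^{-1/2}$ squared and the shrunken support of $\1_{\Z_p}(b/a)$ in the $x_3$-integral, does go through exactly as you predict, so the two Gaussian subcases merge into the single formula $\max(1,|\xi_4|_p)^{-1}\1_{\{|\xi_3 x_2|_p\le\max(1,|\xi_4|_p)\}}$, and the final $x_2$-integration gives $\max(|\xi_3|_p,|\xi_4|_p)^{-1}$ using the convention $|\xi_3|_p,|\xi_4|_p\ge 1$. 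The paper's Plancherel route buys the avoidance of the oscillatory evaluation altogether (only indicator integrals appear), at the cost of bookkeeping an infinite sum of ball-intersection measures. Both arguments are of comparable length and both are valid.
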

\begin{proof}
    Let us define the auxiliary function $$f(x_2 , x_3):=\int_{ \Z_p}e^{2 \pi i \{(\xi_3   x_2 +  \xi_4 x_3)u +  \xi_4 x_2 \frac{u^2}{2} \}_p} du. $$Then, as a function on $\Z_p^2$, we can compute its Fourier transform  as \begin{align*}
        \mathcal{F}_{\Z_p^2}[f](\alpha , \beta)&= \int_{\Z_p^2} \int_{ \Z_p}e^{2 \pi i \{(\xi_3   x_2 +  \xi_4 x_3)u +  \xi_4 x_2 \frac{u^2}{2} - \alpha x_2 - \beta x_3 \}_p} du dx_2 dx_3\\&=\int_{ \Z_p}\int_{\Z_p^2} e^{2 \pi i \{(\xi_3 u  +  \xi_4 \frac{u^2}{2} - \alpha )x_2 - ( \xi_4 u - \beta)x_3 \}_p}  dx_2 dx_3du\\&= \int_{\Z_p} \1_{\Z_p} (\xi_3 u  +  \xi_4 \frac{u^2}{2} - \alpha) \1_{\Z_p} (\xi_4 u - \beta) du.  
    \end{align*}The condition $|\xi_4 u - \beta|_p \leq 1 $ implies that $\xi_4 u =\beta$ in $\Q_p / \Z_p$. In this way, we have the following equality modulo $\Z_p$:  $$\xi_3 u  +  \xi_4 \frac{u^2}{2} = \frac{u}{2}\big( 2 \xi_3   +  \xi_4 u\big) = \frac{u}{2}\big( 2 \xi_3   +  \beta \big)  \in \Q_p / \Z_p,$$therefore \begin{align*}
        \mathcal{F}_{\Z_p^2}[f](\alpha , \beta)&= \int_{\Z_p} \1_{\Z_p} (u(  \xi_3   +  \beta/2) - \alpha) \1_{\Z_p} (\xi_4 u - \beta) du \\ &=\int_{\Z_p} \1_{\frac{\alpha}{\xi_3   +  \beta/2} + p^{-\vartheta(\xi_3   +  \beta/2)} \Z_p} (u) \1_{\frac{\beta}{\xi_4} + p^{-\vartheta(\xi_4)}\Z_p }(u)du \\ &= \mu_{\Z_p} \Big(\frac{\alpha}{\xi_3   +  \beta/2} + p^{-\vartheta(\xi_3   +  \beta/2)} \Z_p\cap\frac{\beta}{\xi_4} + p^{-\vartheta(\xi_4)} \Z_p \Big),
    \end{align*}where $\mu_{\Z_p}$ denotes the unique normalized Haar measure on $\Z_p$. Here we have to notice how, since we are integrating over $u\in \Z_p$, in order for the above integral to be different from zero we need $$\big| \frac{\alpha}{\xi_3   +  \beta/2}\big|_p \leq 1, \, \, \text{and} \, \, \, \big| \frac{\beta}{\xi_4}\big|_p \leq 1,$$so that only finitely many terms  $\mathcal{F}_{\Z_p^2}[f](\alpha , \beta)$ can be non-zero. Also \[ \mathcal{F}_{\Z_p^2}[f](\alpha , \beta) = \begin{cases}
    0, & \, \, \text{if the balls are disjoint}, \\ 
    \max\{|\xi_4|_p ,  |\xi_3   +  \beta/2|_p\}^{-1}, &\, \, \text{if the balls are not disjoint}.
\end{cases}\] 
Finally, we use this Fourier transform to calculate the $L^2$-norm of $f$ in two different cases. When $|\xi_3|_p >|\xi_4|_p$, we have $|\xi_3   +  \beta/2|_p = |\xi_3|_p$, so that  
\begin{align*}
    \| f \|_{L^2 (\Z_p^2)}^2 & = \sum_{(\alpha , \beta) \in \widehat{\Z}_p^2} \mu_{\Z_p} \Big(\frac{\alpha}{\xi_3   +  \beta/2} + p^{-\vartheta(\xi_3   +  \beta/2)} \Z_p\cap\frac{\beta}{\xi_4} + p^{-\vartheta(\xi_4)} \Z_p \Big)^2 \\ &=\sum_{(\alpha , \beta) \in \widehat{\Z}_p^2} \mu_{\Z_p} \Big(\frac{\alpha}{\xi_3   +  \beta/2} + p^{-\vartheta(\xi_3   )} \Z_p\cap\frac{\beta}{\xi_4} + p^{-\vartheta(\xi_4)} \Z_p \Big)^2 \\ &=\sum_{\beta \in \widehat{\Z}_p}|\xi_3|_p^{-1} \sum_{\alpha \in \widehat{\Z}_p} \mu_{\Z_p} \Big(\frac{\alpha}{\xi_3   +  \beta/2} + p^{-\vartheta(\xi_3   +  \beta/2)} \Z_p\cap\frac{\beta}{\xi_4} + p^{-\vartheta(\xi_4)} \Z_p \Big) \\ &=\sum_{\beta \in \widehat{\Z}_p}|\xi_3|^{-1}_p   \mu_{\Z_p}\Big(\frac{\beta}{\xi_4} + p^{- \vartheta(\xi_4)} \Z_p \Big) = |\xi_3|_p^{-1}.
\end{align*}
When $|\xi_3|_p \leq |\xi_4|_p,$we get $|\xi_3   +  \beta/2|_p \leq |\xi_4|_p$, so we conclude in a similar way

\begin{align*}
    \| f \|_{L^2 (\Z_p^2)}^2 & = \sum_{(\alpha , \beta) \in \widehat{\Z}_p^2} \mu_{\Z_p} \Big(\frac{\alpha}{\xi_3   +  \beta/2} + p^{-\vartheta(\xi_3   +  \beta/2)} \Z_p\cap\frac{\beta}{\xi_4} + p^{-\vartheta(\xi_4)} \Z_p \Big)^2 \\ &= \sum_{\alpha \in \widehat{\Z}_p} \sum_{\beta \in \widehat{\Z}_p} |\xi_4|_p^{-1} \mu_{\Z_p} \Big(\frac{\alpha}{\xi_3   +  \beta/2} + p^{-\vartheta(\xi_3   +  \beta/2)} \Z_p\cap\frac{\beta}{\xi_4} + p^{-\vartheta(\xi_4)} \Z_p \Big) \\ &=\sum_{\alpha \in \widehat{\Z}_p}|\xi_4|^{-1}_p  \mu_{\Z_p}\Big(\frac{\alpha}{\xi_3   +  \beta/2} + p^{-\vartheta(\xi_3 +  \beta/2)} \Z_p \Big) = |\xi_4|_p^{-1}.
\end{align*}
This concludes the proof. 
\end{proof}
\subsection{The Engel group over $\Z_p$}
Let $p>2$ be a prime number. Let us denote by $\mathfrak{b}_4$ the $\Z_p$-Lie algebra generated by $X_1,..,X_4$, with the commutation relations $$[X_1 , X_2] = X_3, \,\,\, [X_1 , X_3 ] = X_4, \,\,\, \mathfrak{b}_4 = \bigoplus_{j=1}^3 \mathcal{V}_j, $$where $$\mathcal{V}_1 = \mathrm{span}_{\Z_p} \{ X_1 , X_2 \}, \,\,\, \mathcal{V}_1 = \mathrm{span}_{\Z_p} \{ X_3 \}, \,\,\, \mathcal{V}_3 = \mathrm{span}_{\Z_p} \{ X_4 \}.$$We call the $\Z_p$-Lie algebra $\mathfrak{b}_4$ the $4$-dimensional \emph{Engel algebra}, and its exponential image, which we denote here by $\mathcal{B}_4$, is called the \emph{Engel group} over the $p$-adic integers.    
Let us consider the realization of $\mathfrak{b}_4$ as the matrix algebra  \[
\mathfrak{b}_{4}(\Z_p)= \left\{X=
  \begin{bmatrix}
    0 & x_1 & 0 & x_4 -\frac{1}{2}x_1 (x_3  - \frac{1}{2}x_1 x_2) - \frac{1}{6}x_1^2 x_2 \\
    0 & 0 & x_1 & x_3 - \frac{1}{2}x_1 x_2 \\
    0 & 0 & 0 & x_2  \\
    0 & 0 & 0 & 0 \\
  \end{bmatrix}\in \mathcal{M}_{4}(\Z_p) \, : \, (x_1, x_2, x_3, x_4) \in \Z_p^4 \right\}. 
\]
With this realization, and using the usual matrix exponential map, we can think on $\mathcal{B}(\Z_p)$ as the matrix group \[
\mathcal{B}_{4}(\Z_p)= \left\{\mathbf{x}=
  \begin{bmatrix}
    1 & x_1 & \frac{1}{2}x_1^2 & x_4  \\
    0 & 1 & x_1 & x_3  \\
    0 & 0 & 1 & x_2  \\
    0 & 0 & 0 & 1 \\
  \end{bmatrix}\in \mathcal{M}_{4}(\Z_p) \, : \, (x_1, x_2, x_3, x_4) \in \Z_p^4 \right\}, 
\]
which analytically isomorphic to $\Z_p^4$ with the operation $$\mathbf{x} \star \mathbf{y}:=(x_1 + y_1, x_2 + y_2, x_3 + y_3 + x_1 y_2, x_4 + y_4 + x_1 y_3 + \frac{1}{2} x_1^2 y_2).$$

The exponential map transforms sub-ideals of the Lie algebra $\mathfrak{b}_{4} $ to subgroups of $\mathcal{B}_4 \cong (\mathfrak{b}_4 , \star)$, which can be endowed with the sequence of subgroups $J_n := (  \mathfrak{b}_{4}(p^n\Z_p), \star)$, where $$ \mathfrak{b}_{4}(p^n\Z_p)= p^n \Z_p X_1 + p^n \Z_p X_2 + p^n \Z_p X_3 + p^n\Z_p X_{4},$$so $\mathcal{B}_4$ is a compact Vilenkin group, together with the sequence of compact open subgroups $$G_n := \mathcal{B}_4 (p^n \Z_p)= \textbf{exp}(\mathfrak{b}_{4}(p^n \Z_p)), \,\,\, n \in \N_0.$$
  Notice how the sequence $\mathscr{G}=\{G_n\}_{n \in \N_0}$ forms a basis of neighbourhoods at the identity, so the group is metrizable, and we can endow it with the natural ultrametric \[ | \mathbf{x} \star \mathbf{y}^{-1}|_{\mathscr{G}} :=\begin{cases} 0 & \, \, \text{if} \, \mathbf{x}=\mathbf{y}, \\ |G_n| = p^{-4n}  & \, \, \text{if} \, \mathbf{x} \star \mathbf{y}^{-1} \in G_n \setminus G_{n+1}.\end{cases}\]   
Nevertheless, instead of this ultrametric we will use the $p$-adic norm $$\| \mathbf{x} \|_p:= \max_{1 \leq j \leq 4} \|x_j\|_p, .$$ Notice that how $\|\mathbf{x} \|_p^{4} = |\mathbf{x}|_{\mathscr{G}},$ for any $\mathbf{x} \in \mathcal{B}_4 (\Z_p).$
\subsection{Directional VT operators}
One important idea from the theory of differential and pseudo-differential operators on Lie groups, is the correspondence between directional derivatives and elements of the Lie algebra. However, in the $p$-adic case, there are plenty of non-trivial locally constant functions, due to the fact that $p$-adic numbers are totally disconnected. This means that the usual notion of derivative does not apply, and therefore we need to find an alternative kind of operators to talk about differentiability on these groups. A first approach to this problem can be the \emph{Vladimitov-Taibleson operator} \cite{Dragovich2023, Dragovich2017}, which we define for general compact $\K$-Lie groups as follows:    

\begin{defi}\label{defiVToperator}\normalfont
Let $\K$ be a non-archimedean local field with ring of integers $\mathscr{O}_\K$, prime ideal $\mathfrak{p}=\textbf{p} \mathscr{O}_\K$ and residue field $\mathbb{F}_q \cong \mathscr{O}_\K/\textbf{p} \mathscr{O}_\K$. Let  $\mathbb{G} \leq \mathrm{GL}_m (\mathscr{O}_\K)$ be a compact $d$-dimensional $\K$-Lie group. We define the \emph{Vladimirov-Taibleson operator} on $G$  via the formula \[
D^\alpha f(\mathbf{x}) := \frac{1 - q^\alpha}{1 - q^{- (\alpha + d)}} \int_{\mathbb{G}} \frac{f (\mathbf{x} \star \mathbf{y}^{-1}) - f(\mathbf{x})}{\|\mathbf{y} \|_\K^{ \alpha + d}} d\mathbf{y},
\]where \[\| \mathbf{y} \|_\K := \begin{cases}
    1, \, & \, \, \text{if} \, \, \mathbf{y} \in \mathbb{G} \setminus \mathrm{GL}_m (\textbf{p}\mathscr{O}_\K), \\ q^{-n}, \, & \, \, \text{if} \, \, y \in \mathrm{GL}_m (\textbf{p}^n\mathscr{O}_\K) \setminus \mathrm{GL}_m (\textbf{p}^{n+1}\mathscr{O}_\K). 
\end{cases}\]Here $dy$ denotes the unique normalized Haar measure on $\mathbb{G}$. Sometimes it will be convenient to consider the operator \[
\mathbb{D}^\alpha f(\mathbf{x}) :=\frac{1-q^{-d}}{1-q^{-(\alpha +d)}}f(\mathbf{x}) + \frac{1 - q^\alpha}{1 - q^{- (\alpha + d)}} \int_{\mathbb{G}} \frac{f (\mathbf{x} \star \mathbf{y}^{-1}) - f(\mathbf{x})}{\|\mathbf{y} \|_\K^{ \alpha + d}} d \mathbf{y},
\]
\end{defi}

The Vladimirov-Taibleson operator can be considered as a fractional Laplacian for functions on totally disconnected spaces, and it provides a first notion of differentiability. However, for functions of several variables it is natural to consider the differentiability of the function in each variable, or in a certain given direction. For that reason, we introduce the following definition:

 \begin{defi}\label{defcompactdirectionalVT}\normalfont
      Let $\mathfrak{g}$ be the $\mathscr{O}_\K$-Lie module associated to $\mathbb{G}$, and assume that $\mathfrak{g}$ is nilpotent. Let $\alpha>0$. Given a $V \in \mathfrak{g}$, we define the \emph{directional VT operator of order $\alpha$ in the direction of} $V$ as the linear invariant operator $\partial_V^\alpha$ acting on smooth functions via the formula $$\partial_V^\alpha f (\mathbf{x}) := \frac{1 - q^{\alpha}}{1-q^{-(\alpha+1)}} \int_{\mathscr{O}_\K} \frac{f(\mathbf{x} \star  \textbf{exp}(tV)^{-1}) - f(\mathbf{x})}{|t|_\K^{ \alpha + 1}}dt, \, \, \, f \in \mathcal{D}(\mathbb{G}).$$ 
 \end{defi} 

 \begin{rem}
 Directional VT operators are interesting because they associate a certain pseudo-differential operator to each element of the Lie algebra. Nonetheless, it is important to remark how this association does not follow the same patter as in the locally connected case, where the correspondence between vectors and operators preserves the Lie algebra structure.
 \end{rem}

Just to give an example, in the particular case when $G=\mathfrak{g}=\Z_p^d$, these operators take the form $$\partial_V^\alpha f (x) := \frac{1 - p^{\alpha}}{1-p^{-(\alpha+1)}}\int_{\Z_p^d} \frac{f(x -tV) - f(x)}{| t |_p^{ \alpha + 1}}dt, $$and one can easily compute its associated symbol: $\sigma_{\partial_V^\alpha}(\xi)=0$, if $V \cdot \xi =0$, and $$\sigma_{\partial_V^\alpha}(\xi) = |V \cdot \xi|_p^\alpha - \frac{1 - p^{-1}}{1 - p^{- (\alpha + 1)}}, \, \, \, \, \xi \in \widehat{\Z}_p^d,$$in other case. If we define $\partial_{x_i}^\alpha := \partial_{e_i}^\alpha $, where $e_i$, $1 \leq i \leq d$ are the canonical vectors of $\Q_p^d$, then \[\sigma_{\partial_{x_i}^\alpha}(\xi) = \begin{cases}
    0, \, & \, \, \text{if} \, \, |\xi_i|_p=1,\\| \xi_i|_p^\alpha - \frac{1 - p^{-1}}{1 - p^{- (\alpha + 1)}}  & \, \, \text{if} \, \, |\xi_i|_p>1,
\end{cases}
 \]which resembles the symbol of the usual partial derivatives on $\R^d$, justifying that way our choice of notation. However, we want to be emphatic about the fact that these directional VT operators are not derivatives, but rather some special kind of pseudo-differential operators which we will study with the help of the Fourier analysis on compact groups.

Before proceeding to the next section, let us introduce some notation. 

\begin{defi}\label{definotation}\normalfont
\,
\begin{itemize}
        \item The symbol $\mathrm{Rep}(\mathcal{B}_4)$ will denote the collection of all unitary finite-dimensional representations of $\mathcal{B}_4$. We will denote by $\widehat{\mathcal{B}}_4$ the \emph{unitary dual of $\mathcal{B}_4$}.
        \item Let $K$ be a normal sub-group of $\mathcal{B}_4$. We denote by $K^\bot$ the \emph{anihilator of $K$}, here defined as $$K^\bot:= \{ [\pi] \in \mathrm{Rep}(\mathcal{B}_4) \, : \, \pi|_{K}=I_{d_\pi} \}.$$Also, we will use the notation $$B_{\widehat{\mathcal{B}}_4}(n):=\widehat{\mathcal{B}}_4 \cap \mathcal{B}_4(p^n\Z_p)^\bot,$$and $\widehat{\mathcal{B}}_4(n):= B_{\widehat{\mathcal{B}}_4}(n) \setminus B_{\widehat{\mathcal{B}}_4}(n-1).$ 
        \item We say that a function $f:\mathcal{B}_4 \to \C$ is a \emph{smooth function}, if $f$ is a locally constant function with a fixed index of local constancy, i.e., there is an $n_f \in \N_0$, which we always choose to be the minimum possible, such that $$f(\mathbf{x}\star \mathbf{y}) = f(\mathbf{x}), \, \, \,\text{ for all} \,\,\, y \in \mathcal{B}_4(p^n \Z_p).$$
        We will denote by $\mathcal{D}(\mathcal{B}_4)$ the collection of all smooth functions on $\mathcal{B}_4$, and $\mathcal{D}_n (\mathcal{B}_4)$ will denote the collection of smooth functions with index of local constancy equal to $n \in \N_0$. 
    \end{itemize}
\end{defi}

\section{Representation theory of the Engel group}
Given a representation $[\pi] \in \widehat{\mathcal{B}}_4$ we can say that there are two possibilities: the representation is trivial on the center, or it is not. When the representation is trivial on the center, since $\mathcal{B}_4 / \mathcal{Z}(\mathcal{B}_4) \cong \mathbb{H}_1 (\Z_p)$, the representation must descend to an unitary irreducible representation of $\mathbb{H}_1 (\Z_p)$. These representations were completely described in \cite{SublaplacianHeisenberg}, and they have the form  

$$\pi_{(\xi_1 ,\xi_2, \xi_3)}(\mathbf{x}) \varphi (u) := e^{2 \pi i \{\xi_1 x_1 + \xi_2 x_2 + \xi_3 (x_3 +  u x_2) \}_p} \varphi (u + x_1), \, \, \, \, \varphi \in \mathcal{H}_{\xi_3},$$
where
$$\mathcal{H}_{\xi_3} := \mathrm{span}_\C \{ \varphi_h \, : \, h \in \Z_p / p^{-\vartheta(\xi_3)} \Z_p  \}, \, \, \, \varphi_h (u) := |\xi_3|^{1/2}_p \1_{h + p^{-\vartheta(\xi_3)} \Z_p} (u), \, \, \mathrm{dim}_\C(\mathcal{H}_{\xi_3}) =|\xi_3|_p.$$
See \cite{SublaplacianHeisenberg} for the full calculation of $\widehat{\mathbb{H}}_d(\Z_p)$. This shorten our calculations here by a lot, and actually, the ideas introduced in \cite{SublaplacianHeisenberg} are very useful to deduce how to obtain the unitary dual for the more complicated case of $\mathcal{B}_4$. 

\subsection{Level 1:}
We will use the fact that any representation of $\mathcal{B}_4$ reduces to the representation of some finite group of Lie type, obtained as the quotient between $\mathcal{B}_4$ and one of it compact open subgroups. See again the notation introduced in Definition \ref{definotation}. 

First, as it is obvious, $B_{\widehat{\mathcal{B}}_4} (0)$, the collection of unitary irreducible representations that are trivial on $G_0 = \mathcal{B}_4 (\Z_p)$, contains only the identity representation. To calculate $B_{\widehat{\mathcal{B}}_4} (1)$, the collection of unitary irreducible representations that are trivial on $G_1 = \mathcal{B}_4 (p \Z_p)$, we have to find those representations which descend to an element of the unitary dual of $G / G_1 \cong \mathcal{B}_4 (\Z_p / p \Z_p) \cong \mathcal{B}_4 (\mathbb{F}_p) .$ The usual representation theory of nilpotent groups applied to $\mathcal{B}_4 (\mathbb{F}_p)$ provides us with three different kind of representations, which correspond to the three different kind of co-adjoint orbits obtained via the Kirilov orbit method:

\begin{itemize}
    \item We have $p^2$ one dimensional representations given by the $p$-adic characters $$\pi_{(\xi_1 , \xi_2 ,1,1)} (\mathbf{x}) = \chi_{\xi_1 , \xi_2} (\mathbf{x}) = e^{2 \pi i \{ \xi_1 x_1 + \xi_2 x_2 \}_p}, \, \, \, \, \| (\xi_1 , \xi_2) \|_p \leq p.$$Clearly it holds $\int_{\mathcal{B}_4} |\chi_{\xi_1 , \xi_2} (\mathbf{x})|^2 d\mathbf{x} =1$.
    \item There are $p-1$ noncommutative representations which are trivial on the center of $\mathcal{B}_4 (\mathbb{F}_p)$, and therefore descend to a representation of $\mathbb{H}_1 (\mathbb{F}_p)$. These have the form $$\pi_{(1,1 , \xi_3 ,1)} (\mathbf{x}) = \pi_{ \xi_3}(\mathbf{x}) \varphi (u) := e^{2 \pi i \{ \xi_3 (x_3 +  u x_2) \}_p} \varphi (u + x_1), \, \, \, \, \varphi \in \mathcal{H}_{\xi_3},$$where $|\xi_3|_p = p$, and  $$\mathcal{H}_{\xi_3} := \spn_\C \{ \varphi_h \, : \, h \in \Z_p / p \Z_p  \}, \, \, \, \varphi_h (u) := p^{ 1/2} \1_{h + p \Z_p} (u), \, \, \dim_\C(\mathcal{H}_{\xi_3}) =p.$$The characters associated to these representations have the form $$\chi_{\pi_{\xi_3}}(\mathbf{x}) = \sum_{h \in  \Z_p/p\Z_p} e^{ 2 \pi i \{ {\xi_3}(x_3 +  h x_2 ) \}_p} \1_{p \Z_p}(x_1) = p e^{ 2 \pi i \{ {\xi_3} x_3  \}_p} \1_{p \Z_p}(x_1) \1_{p \Z_p}(x_2), $$thus the irreductibility of $[\pi_{\xi_3}]$ is proved by the condition  $$\int_{\mathcal{B}_4}|\chi_{\pi_{\xi_3}}(\mathbf{x})|^2 d\mathbf{x} \,  = p^{2} \int_{\mathcal{B}_4}|\1_{p \Z_p}(x_1) \1_{p \Z_p}(x_2)|^2 d\mathbf{x} \,  = 1.$$
    \item There are $p(p-1)$ representations with the form
 $$\pi_{(1,\xi_2,1, \xi_4,1)} (\mathbf{x}) \varphi(u) = \pi_{(\xi_2 , \xi_4)} (\mathbf{x}) \varphi(u):= e^{2 \pi i \{ \xi_4(x_4 +ux_3 + \frac{1}{2} u^2 x_2 ) + \xi_2 x_2 \}_p} \varphi (u + x_1 ),  \, \varphi \in \mathcal{H}_{\xi_4},$$where $1 \leq |\xi_2|_p \leq p$ and $ |\xi_4|_p = p$. The representation space needs to be  $$\mathcal{H}_{\xi_4} := \spn_\C \{ \varphi_h \, : \, h \in \Z_p / p \Z_p  \}, \, \, \, \varphi_h (u) := p^{ 1/2} \1_{h + p \Z_p} (u), \, \, \dim_\C(\mathcal{H}_{\xi_4}) =p.$$By using this basis we can compute \begin{align*}
     \chi_{\pi_{(\xi_2 , \xi_4)}}(\mathbf{x}) = pe^{2 \pi i \{ \xi_2 x_2 + \xi_4 x_4 \}_p} \1_{p\Z_p}(x_1) \int_{\Z_p} e^{2 \pi i \{ \xi_4 x_3 u + \frac{1}{2}\xi_4 x_2 u^2 \}_p} du,
 \end{align*} 
where the above integral is a $p$-adic Gaussian integral. Using Lemma \ref{lemagaussian}, see the calculations in  \cite[{p.p. 65}]{vladiBook}, we obtain  
 $\chi_{\pi_{(\xi_4 , \xi_2)}}(\mathbf{x})=$\[ \, \, \, \, \, \,\,\,\,\,  pe^{2 \pi i \{ \xi_2 x_2 + \xi_4 x_4 \}_p} \1_{p\Z_p}(x_1) \times \begin{cases}
    \1_{\Z_p}(\xi_4 x_3), \, & \, \, \text{if} \, \, |\xi_4 x_2|_p \leq 1,\\ \lambda_p (\frac{\xi_4 x_2}{2})| \xi_4 x_2|_p^{-1/2} e^{2 \pi i \{ -\frac{(\xi_4 x_3)^2}{2 \xi_4 x_2} \}_p}\1_{\Z_p} (\frac{x_3}{ x_2}),  & \, \, \text{if} \, \, |\xi_4 x_2|_p>1.
\end{cases}
 \]See again Lemma \ref{lemagaussian}. Now, considering the fact that $|\xi_4|_p=p$, the irreducibility of the representation follows from the calculation \begin{align*}
     \int_{\mathcal{B}_4} &|\chi_{\pi_{(\xi_4 , \xi_2)}}(\mathbf{x})|^2 dx = \int_{\Z_p^3}\int_{p\Z_p} p^2 \1_{p\Z_p}(x_1)\1_{p\Z_p}(x_3)    dx_2 (dx_1 dx_3 dx_4) \\ &+ \int_{\Z_p^3}\int_{\Z_p \setminus \Z_p} p^2 | \1_{p\Z_p} (x_1) \lambda_p (\frac{\xi_4 x_2}{2})| \xi_4 x_2|_p^{-1/2} \1_{\Z_p} (x_3/ x_2)|^2 dx_2 (dx_1 dx_3 dx_4).
 \end{align*}
 In one hand $$\int_{\Z_p^3}\int_{p\Z_p} p^2 \1_{p\Z_p}(x_1)\1_{p\Z_p}(x_3)    dx_2 (dx_1 dx_3 dx_4) = 1/p.$$
 On the other \begin{align*}
     \int_{\Z_p^3}\int_{\Z_p \setminus \Z_p} p^2 | \1_{p\Z_p} (x_1) \lambda_p (\frac{\xi_4 x_2}{2})| \xi_4 x_2|_p^{-1/2} \1_{\Z_p} (x_3/ x_2)|^2 dx_2 (dx_1 dx_3 dx_4) = \\ \int_{\Z_p^3}\int_{\Z_p \setminus p \Z_p} p^2 | \1_{p\Z_p} (x_1) p^{-1/2} |x_2|_p^{-1/2} \1_{p\Z_p} (x_3)|^2 dx_2 (dx_1 dx_3 dx_4) \\ = \int_{\Z_p^3}\int_{\Z_p \setminus \Z_p} p^2 | \1_{p\Z_p} (x_1)  \1_{p\Z_p} (x_3)|^2 dx_2 (dx_1 dx_3 dx_4) = 1 - 1/p.
\end{align*}
\end{itemize}
Notice how these representations are indexed by the sets 
\begin{align*}
    &B_{\widehat{\mathcal{B}}_4}(1)=  \\&\{  \| \xi \| \leq p  : \, 1 \leq |\xi_4|_p <|\xi_3|_p \, \wedge \,  (\xi_1 , \xi_2 , \xi_3) \in \widehat{\mathbb{H}}_1 , \, \text{or}\, \, |\xi_3|_p=1, \, \,  p = |\xi_4|_p \,  \wedge  \,  \xi_1 \in \Q_p / p^{-1} \Z_p  \},
\end{align*}
and we can check how these are all the elements in $B_{\widehat{\mathcal{B}}_4}(1)=\widehat{\mathcal{B}}_4 \cap \mathcal{B}_4(p^1 \Z_p)^\bot$ since 
     \begin{align*}
         \sum_{\| (\xi_1 ,\xi_2) \|_p \leq p}& \dim_\C(\chi_{\xi_1 , \xi_2})^2 + \sum_{| \xi_3 |_p = p } \dim_\C(\pi_{\xi_3})^2 +\sum_{|\xi_2|_p \leq | \xi_4 |_p = p } \dim_\C(\pi_{(\xi_4, \xi_2)})^2 \\&= \sum_{\| (\xi_1 , \xi_2) \|_p \leq p} 1^2 + \sum_{|\xi_3 |_p = p } p^2 +\sum_{|\xi_2|_p \leq |\xi_4 |_p = p } p^2 =p^4= |G/G_1|.
     \end{align*}

Now, for the next level, we will have to start mixing these three different kind of representations. 

\subsection{Level 2:}
We want to describe the elements of $B_{\widehat{\mathcal{B}}_4}(2)=\widehat{\mathcal{B}}_4 \cap \mathcal{B}_4 (p^2\Z_p)^\bot$. Using once again how $\mathcal{B}_4 / \mathcal{Z}(\mathcal{B}_4) \cong \mathbb{H}_1 (\Z_p)$, and the description of the unitary dual given of $\mathbb{H}_d(\Z_p)$ given in \cite{SublaplacianHeisenberg}, we will obtain the new representations in $\widehat{\mathcal{B}}_4 (2)$ by considering three cases. 

The first case is when an element of $B_{\widehat{\mathcal{B}}_4}(2)$ acts trivially on the center, and it must descend to a representation of $\mathbb{H}_1(\Z_p)$, so it has the form $$\pi_\xi (\mathbf{x}) \varphi(u) = \pi_{(\xi_1 ,\xi_2, \xi_3, 1)}(\mathbf{x}) \varphi (u) := e^{2 \pi i \{\xi_1 x_1 + \xi_2 x_2 + \xi_3 (x_3 +  u x_2) \}_p} \varphi (u + x_1), \, \, \, \, \varphi \in \mathcal{H}_{\xi_3},$$where the representation space is $$\mathcal{H}_\xi = \mathcal{H}_{\xi_3} := \mathrm{span}_\C \{ \varphi_h \, : \, h \in \Z_p / p^{- \vartheta(\xi_3)} \Z_p  \}, \, \, \, \varphi_h (u) := |\xi_3|_p^{1/2} \1_{h + p^{- \vartheta(\xi_3)} \Z_p} (u), \, \, \| \xi \|_p \leq p^2,$$ the associated character is $$\chi_{\pi_{(\xi_1, \xi_2, \xi_3,1)}} (\mathbf{x}) = |\xi_3|_p e^{2 \pi i \{\xi_1 x_1 + \xi_2 x_2 + \xi_3 x_3 \}_p} \1_{p^{- \vartheta(\xi_3)}\Z_p}(x_1)\1_{p^{- \vartheta(\xi_3)}\Z_p}(x_2),$$and we clearly have $\mathrm{dim}_\C(\mathcal{H}_{\xi_3}) =|\xi_3|_p$. For these representations it holds that $$\sum_{\xi} d_\xi^2 = p^6= |\mathbb{H}_1 (\Z_p)/\mathbb{H}_1 (p^2 \Z_p)|.$$On the other hand, if a representation is in $B_{\widehat{\mathcal{B}}_4}(2)$, and it is not trivial on the center of $\mathcal{B}_4(\Z_p)$, then we need to examine two remaining cases. 

The second case we need to examine is when $|\xi_4|_p = p^2$ and $1 \leq |\xi_2|_p \leq p^2$. Then we can define $$ \pi_{(\xi_2 , \xi_4)} (\mathbf{x}) := e^{2 \pi i \{ \xi_4(x_4 +ux_3 + \frac{1}{2} u^2 x_2 ) + \xi_2 x_2 \}_p} \varphi (u + x_1 ),  \, \varphi \in \mathcal{H}_{\xi_4},$$where $1 \leq |\xi_2|_p \leq p^2$, and$$\mathcal{H}_{\xi_4} := \spn_\C \{ \varphi_h  : \, h \in \Z_p / p^{- \vartheta(\xi_4)} \Z_p  \}, \, \, \varphi_h (u) := |\xi_4|_p^{1/2} \1_{h + p^{- \vartheta(\xi_4)}\Z_p} (u), \,  \dim_\C(\mathcal{H}_{\xi_4}) =|\xi_4|_p.$$These representations are unitary and irreducible, since their associated character is given by \begin{align*}
    &\chi_{\pi_{(\xi_4 , \xi_2)}}(\mathbf{x}) = |\xi_4|_pe^{2 \pi i \{ \xi_2 x_2 + \xi_4 x_4 \}_p} \1_{|\xi_4|_p\Z_p}(x_1)\1_{|\xi_4|_p \Z_p}( x_3)\1_{|\xi_4|_p \Z_p}( x_2) \\ &+ |\xi_4|_pe^{2 \pi i \{ \xi_2 x_2 + \xi_4 x_4 \}_p} \1_{|\xi_4|_p\Z_p}(x_1)\cdot \Lambda(\frac{\xi_4 x_2}{2}, \xi_4 x_3) \cdot \1_{\Z_p} (\frac{x_3}{ x_2}) \1_{\Q_p \setminus \Z_p}(\xi_4 x_2),  
    \end{align*}which defines an $L^2$-normalized function. Here $\Lambda (a,b)$ is defined as in Lemma \ref{lemagaussian}. See \cite[{p.p. 65}]{vladiBook} for the calculations with $p$-adic Gaussian integrals. 
    
    Finally, for the third and last case, we have to examine mixed representations, which occur when $|\xi_4|_p = p$. These have the form  \[\pi_{\xi}(\mathbf{x}) \varphi (u) :=  e^{2 \pi i \{ \xi \cdot \mathbf{x} + u(\xi_3 x_2 + \xi_4 x_3) + \frac{u^2}{2} \xi_4 x_2 \}_p}\varphi(u + x_1),\]where $1 \leq |\xi_3|_p \leq 2$, and they can be realized in the sub-spaces $\mathcal{H}_\xi$ of $L^2 (\Z_p)$ defined as $$\mathcal{H}_\xi  := \spn_\C \{ \varphi_h = \|(\xi_3, \xi_4)\|_p^{1/2} \1_{h + p^{-(\xi_3, \xi_4)} \Z_p} (u)  :  h \in \Z_p / p^{-(\xi_3, \xi_4)} \Z_p  \}.$$   
    Let us define here $d_\xi:= \max \{ |\xi_3|_p , |\xi_4|_p\}$.     With this realization and the natural choice of basis, the associated matrix coefficients of the representations are given by \begin{align*}
    (\pi_{\xi } (\mathbf{x}))_{h h'}&= (\pi_{\xi} (\mathbf{x}) \varphi_h , \varphi_{h'})_{L^2 (\Z_p)} \\ &= d_\xi \int_{\Z_p} e^{2 \pi i \{ \xi \cdot x + u(\xi_3 x_2 + \xi_4 x_3) + \frac{u^2}{2} \xi_4 x_2 \}_p} \1_h (u +x_1) \1_{h'} (u) du \\ &=d_\xi  e^{2 \pi i \{\xi \cdot x \}_p} \1_{h - h'} (x_1) \int_{h' + d_\xi \Z_p}e^{2 \pi i \{(\xi_3   x_2 +  \xi_4 x_3)u +  \xi_4 x_2 \frac{u^2}{2} \}_p}  du \\ &= e^{2 \pi i \{\xi \cdot x \}_p} e^{2 \pi i \{(\xi_3   x_2 +  \xi_4 x_3)(h') +  \xi_4 x_2 \frac{(h')^2}{2} \}_p} \1_{h - h' + d_\xi \Z_p}(x_1) \\ & \, \, \, \, \,\,\,\,  \times\Big( d_\xi \int_{d_\xi \Z_p^d} e^{2 \pi i \{(\xi_3   x_2 +  \xi_4 x_3)u +  \xi_4 x_2 \frac{u^2  + 2u h' }{2} \}_p} \Big). 
\end{align*}
From here it is clear how \begin{align*}
    \chi_{\pi_\xi } (\mathbf{x}) &= \sum_{h \in \Z_p / d_\xi \Z_p} (\pi_{\xi } (\mathbf{x}))_{h h}\\ &= \sum_{h \in \Z_p / d_\xi \Z_p} d_\xi  e^{2 \pi i \{ \xi \cdot x + u(\xi_3 x_2 + \xi_4 x_3) + \frac{u^2}{2} \xi_4 x_2 \}_p} \1_{h - h'} (x_1) \int_{h' + d_\xi \Z_p}e^{2 \pi i \{(\xi_3   x_2 +  \xi_4 x_3)u +  \xi_4 x_2 \frac{u^2}{2} \}_p}  du \\ & = d_\xi  e^{2 \pi i \{\xi \cdot x \}_p} \1_{d_\xi \Z_p} (x_1) \int_{ \Z_p}e^{2 \pi i \{(\xi_3   x_2 +  \xi_4 x_3)u +  \xi_4 x_2 \frac{u^2}{2} \}_p}  du,
\end{align*}where one can write explicitly \begin{align*}
    \int_{  \Z_p}&e^{2 \pi i \{(\xi_3   x_2 +  \xi_4 x_3)u +  \xi_4 x_2 \frac{u^2}{2} \}_p} =   \1_{\Z_p}(\frac{\xi_4 x_2}{2}) \cdot \1_{ \Z_p} (\xi_3   x_2 +  \xi_4 x_3)    \\ &+ \1_{\Q_p \setminus \Z_p}(\frac{\xi_4 x_2}{2}) \cdot \Lambda (\frac{\xi_4 x_2}{2},\xi_3   x_2 +  \xi_4 x_3) \cdot \1_{\Z_p} ((\xi_3   x_2 +  \xi_4 x_3)/(\frac{\xi_4 x_2}{2})) .
\end{align*}

Let us analyze these characters: 

\textbf{Case $|\xi_3|_p \leq |\xi_4|_p$:} In this case $d_\xi = |\xi_4|_p$. Since $p>2$, we have the equality $\1_{p^{- \vartheta(\xi_4)} \Z_p}(x_2) = \1_{\Z_p}(\frac{\xi_4 x_2}{2})$. Next \begin{align*}
        \1_{\Z_p}(\frac{\xi_4 x_2}{2}) \cdot \1_{ \Z_p} (\xi_3   x_2 +  \xi_4 x_3) &=  \1_{p^{- \vartheta(\xi_4)} \Z_p}(x_2) \cdot \1_{ \Z_p} (\xi_3   x_2 +  \xi_4 x_3)  \\ &=\1_{p^{- \vartheta(\xi_4)} \Z_p}(x_2) \cdot \1_{ \Z_p} (\xi_4 x_3) \\&= \1_{p^{- \vartheta(\xi_4)}\Z_p}(x_2) \cdot \1_{ p^{- \vartheta(\xi_4)} \Z_p} (x_3),
    \end{align*}where the equality before the last one holds because $\xi_3 x_2 \in \Z_p$ when $|\xi_3|_p \leq |\xi_4|_p$. For the remaining part of the function, we have $$(\xi_3   x_2 +  \xi_4 x_3)/(\frac{\xi_4 x_2}{2}) = 2 \big( \frac{\xi_3}{\xi_4} + \frac{x_3}{x_2} \big),$$so that actually $$\1_{\Z_p} \big( 2 \big( \frac{\xi_3}{\xi_4} + \frac{x_3}{x_2} \big) \big) = \1_{\Z_p} \big(  \frac{\xi_3}{\xi_4} + \frac{x_3}{x_2}  \big) = \1_{\Z_p} \big(  \frac{x_3}{x_2} \big) .$$
    By using all these we can rewrite \begin{align*}
    \int_{  \Z_p}&e^{2 \pi i \{(\xi_3   x_2 +  \xi_4 x_3)u +  \xi_4 x_2 \frac{u^2}{2} \}_p} =   \1_{p^{- \vartheta(\xi_4)} \Z_p}(x_2) \cdot \1_{ p^{- \vartheta(\xi_4)}\Z_p} (x_3)    \\ &+ \1_{\Q_p \setminus \Z_p}(\frac{\xi_4 x_2}{2}) \cdot e^{-2 \pi i\{\frac{\xi_3^2}{2 \xi_4}x_2 +  \xi_3 x_3 \}_p}\Lambda(\frac{\xi_4 x_2}{2},\xi_4 x_3) \cdot \1_{\Z_p} \big(  \frac{x_3}{x_2} \big),
\end{align*}and therefore 
\begin{align*}
    &\chi_{\pi_\xi} (\mathbf{x}) = \\&|\xi_4|_p e^{2 \pi i \{\xi_1 x_1 + \xi_3 x_3  + \xi_2 x_2 + \xi_4 x_4 \}_p} \1_{d_\xi \Z_p} (x_1)
  \1_{p^{- \vartheta(\xi_4)} \Z_p}(x_2) \cdot \1_{ p^{- \vartheta(\xi_4)}\Z_p} (x_3)    \\ &+ |\xi_4|_p e^{2 \pi i \{\xi_1 x_1 + \xi_3 x_3  + \xi_2 x_2 + \xi_4 x_4 \}_p} \1_{d_\xi \Z_p} (x_1)
 \1_{\Q_p \setminus \Z_p}(\frac{\xi_4 x_2}{2}) \cdot e^{-2 \pi i\{\frac{\xi_3^2}{2 \xi_4}x_2 +  \xi_3 x_3 \}_p}\Lambda(\frac{\xi_4 x_2}{2},\xi_4 x_3) \cdot \1_{\Z_p} \big(  \frac{x_3}{x_2} \big),\\&=|\xi_4|_p e^{2 \pi i \{\xi_1 x_1 + \xi_2 x_2 + \xi_4 x_4 \}_p} \1_{d_\xi \Z_p} (x_1)
  \1_{p^{- \vartheta(\xi_4)} \Z_p}(x_2) \cdot \1_{ p^{- \vartheta(\xi_4)}\Z_p} (x_3)    \\ &+ |\xi_4|_p e^{2 \pi i \{\xi_1 x_1   + (\xi_2 - \frac{\xi_3^2}{2 \xi_4}) x_2 + \xi_4 x_4 \}_p} \1_{d_\xi \Z_p} (x_1)
 \1_{\Q_p \setminus \Z_p}(\frac{\xi_4 x_2}{2}) \cdot \Lambda(\frac{\xi_4 x_2}{2},\xi_4 x_3) \cdot \1_{\Z_p} \big(  \frac{x_3}{x_2} \big). 
\end{align*}
So, in the end the variable $\xi_3$ disappears,  and the characters are given by 
\begin{align*}
    \chi_{\pi_\xi} (\mathbf{x}) &= |\xi_4|_p e^{2 \pi i \{\xi_1 x_1 + \xi_2 x_2 + \xi_4 x_4 \}_p} \1_{d_\xi \Z_p} (x_1)
  \1_{p^{- \vartheta(\xi_4)} \Z_p}(x_2) \cdot \1_{ p^{- \vartheta(\xi_4)}\Z_p} (x_3)    \\ &+ |\xi_4|_p e^{2 \pi i \{\xi_1 x_1   + \xi_2  x_2 + \xi_4 x_4 \}_p} \1_{d_\xi \Z_p} (x_1)
 \1_{\Q_p \setminus \Z_p}(\frac{\xi_4 x_2}{2}) \cdot \Lambda(\frac{\xi_4 x_2}{2},\xi_4 x_3) \cdot \1_{\Z_p} \big(  \frac{x_3}{x_2} \big)\\&=|\xi_4|_p e^{2 \pi i \{\xi_1 x_1   + \xi_2  x_2 + \xi_4 x_4 \}_p} \1_{p^{-\vartheta(\xi_4)} \Z_p} (x_1) \int_{\Z_p}e^{2 \pi i \{ \xi_4 x_3u +  \xi_4 x_2 \frac{u^2}{2} \}_p}du.
\end{align*}
Counting all the different functions among these, we can see how we have  $(p-1)p^3$ different representations, all with dimension $d_\xi = |\xi_4|_p =p^2$.

\textbf{Case $p^2 = |\xi_3|_p > |\xi_4|_p$:} In this case $d_\xi = |\xi_3|_p$ and $|\xi_1|_p = |\xi_2|_p =1$, so that $\xi_1$ and $\xi_2$ are not appearing. The associated character is \begin{align*}
    \chi_{\pi_\xi } (\mathbf{x}) &= |\xi_3|_p  e^{2 \pi i \{ \xi_3 x_3   + \xi_4 x_4 \}_p} \1_{p^{-\vartheta(\xi_3)} \Z_p} (x_1) \int_{ \Z_p}e^{2 \pi i \{(\xi_3   x_2 +  \xi_4 x_3)u +  \xi_4 x_2 \frac{u^2}{2} \}_p}  du, \\ &= |\xi_3|_p  e^{2 \pi i \{ \xi_3 x_3  +  \xi_4 x_4 \}_p} \1_{p^{-\vartheta(\xi_3)} \Z_p} (x_1) \int_{ \Z_p}e^{2 \pi i \{(\xi_3   x_2 +  \xi_4 x_3)u +  \xi_4 x_2 \frac{u^2}{2} \}_p} du.
\end{align*}Counting all the different functions among these, we can see how we have  $(p-1)^2 p$ different representations, all with dimension $d_\xi = |\xi_3|_p$. 

Summing up, all the obtained representations can be indexed by the set \begin{align*}
    &B_{\widehat{\mathcal{B}}_4}(2)\\&= \{  \| \xi \| \leq p^2  : \, 1 \leq |\xi_4|_p <|\xi_3|_p, \,  \,  (\xi_1 , \xi_2 , \xi_3) \in \widehat{\mathbb{H}}_1 , \, \text{or} \,  |\xi_3|_p = 1, \,  \,  \xi_1 \in \Q_p / p^{\vartheta(\xi_4)} \Z_p, \, \,\xi_2 \in \widehat{\Z}_p \}
\end{align*}
To convince ourselves that these representations are indeed irreducible we just apply Lemma \ref{lemaaux} to see that \begin{align*}
    \int_{\mathcal{B}_4}|\chi_{\pi_\xi } (\mathbf{x})|^2 d\mathbf{x} &= \| (\xi_3, \xi_4)\|_p^2 \int_{\Z_p^4}  \Big| \1_{p^{-\vartheta(\xi_3, \xi_4)} \Z_p} (x_1) \int_{ \Z_p}e^{2 \pi i \{(\xi_3   x_2 +  \xi_4 x_3)u +  \xi_4 x_2 \frac{u^2}{2} \}_p}  du\Big|^2 d\mathbf{x} = 1. 
\end{align*}Notice how these are indeed all the desired representations since  \begin{align*}
    \sum_{\xi \in B_{\widehat{\mathcal{B}}_4}(2)} d_\xi^2 &=\sum_{\| \xi \|_p\leq p^2, \, (\xi_1 , \xi_2, \xi_3) \in  \widehat{\mathbb{H}}_1 } d_\xi^2 +  \sum_{ |\xi_4|_p = p \, \wedge |\xi_3|_p=1} d_\xi^2 +\sum_{ |\xi_4|_p = p \, \wedge |\xi_3|_p=p^2} d_\xi^2 + \sum_{ |\xi_4|_p = p^2} d_\xi^2\\ &= p^6 + \sum_{ |\xi_4|_p = p \, \wedge |\xi_3|_p=1} p^2 + \sum_{ |\xi_4|_p = p \, \wedge |\xi_3|_p=p^2} (p^2)^2 + \sum_{ |\xi_4|_p = p^2} (p^2)^2 \\ &=p^6 + (p-1)p^3(p)^2 +(p-1)^2p(p^2)^2 +p^3 (p-1)(p^2)^2 \\ &=p^6 +(p-1)p^5 + (p-1)^2p^7 +(p-1)p^7 \\ &= p^6 +(p-1)p^5 + (p-1)^2p^5 + (p-1)p^7\\ &=p^8=  (p^2)^4 = |\mathcal{B}_4 (\mathbb{F}_{p^2})|= |G/G_2|.
\end{align*}

\subsection{General case and Fourier series}
We can use all the ideas collected so far to deduce the general case. First, we can see how the representations we are looking for must have the form 
\[\pi_{\xi}(\mathbf{x}) \varphi (u) := 
    e^{2 \pi i \{\xi_1 x_1 + \xi_2 x_2 + \xi_3 (x_3 +  u x_2)  + \xi_4 (x_4 +  u x_3 + \frac{u^2}{2} x_2) \}_p} \varphi (u + x_1) =e^{2 \pi i \{ \xi \cdot \mathbf{x} + u(\xi_3 x_2 + \xi_4 x_3) + \frac{u^2}{2} \xi_4 x_2 \}_p}\varphi(u + x_1) , \]
where we are taking the function $\varphi$ to be an element of the following sub-space of $L^2 (\Z_p):$ 
\[ \mathcal{H}_\xi :=  \mathrm{span}_\C \{ \| (\xi_3, \xi_4)\|_p^{1/2} \1_{h + d_\xi\Z_p } \,\, : \, h \in \Z_p / p^{-\vartheta(\xi_3, \xi_4)} \Z_p \}, \, \, \, d_\xi:= \mathrm{dim}_\C (\mathcal{H}_\xi) =  \|(\xi_3,\xi_4)\|_p .    \]
One can check easily check how these are all unitary representations of $\mathcal{B}_4$, for any $\xi \in \widehat{\Z}_p^4$, and their associated characters are going to have the form 
\begin{align*}
    \chi_{\pi_\xi} (\mathbf{x}) = \|(\xi_3, \xi_4)\|_p  e^{2 \pi i \{\xi \cdot \mathbf{x} \}_p} \1_{p^{- \vartheta(\xi_3, \xi_4) } \Z_p} (x_1) \int_{ \Z_p}e^{2 \pi i \{(\xi_3   x_2 +  \xi_4 x_3)u +  \xi_4 x_2 \frac{u^2}{2} \}_p}  du.
\end{align*}
Let us use these to count all the different non-equivalent representations in $B_{\widehat{\mathcal{B}}_4} (n):= \widehat{\mathcal{B}}_4 \cap \mathcal{B}_4 (p^n \Z_p)^\bot$: 

\begin{itemize}
    \item If $|\xi_4|_p=1$, then the representation is a Heisenberg representation, and for these it holds that $$\sum_{\{ \| \xi \|_p \leq p^n \, : \, |\xi_4|=1 \}} d_\xi^2 = p^{3n} = |\mathbb{H}_1 (\Z_p) / \mathbb{H}_1 (p^n \Z_p)| = |\mathbb{H}_1 (\mathbb{F}_{p^n})|.$$ 
    \item We saw before how in the case $|\xi_3|_p\leq |\xi_4|_p$ the character reduces to $$\chi_{\pi_\xi} (\mathbf{x}) =|\xi_4|_p e^{2 \pi i \{\xi_1 x_1   + \xi_2  x_2 + \xi_4 x_4 \}_p} \1_{p^{-\vartheta(\xi_4)} \Z_p} (x_1) \int_{\Z_p}e^{2 \pi i \{ \xi_4 x_3u +  \xi_4 x_2 \frac{u^2}{2} \}_p}du.$$
    So they all have dimension $|\xi_4|_p$, and we can index all the non-equivalent classes among these with the set $$\{ \| \xi \|_p \leq p^n   : 1< |\xi_4|_p\leq p^n, \, \,    |\xi_3|_p = 1, \,  \xi_1 \in \Q_p / p^{\vartheta(\xi_4)} \Z_p \}.$$
    
    \item If $1 < |\xi_4|_p < |\xi_3|_p$, the character is going to be  $$\chi_{\pi_\xi} (\mathbf{x}) =|\xi_3|_p e^{2 \pi i \{\xi_1 x_1  +  \xi_2  x_2 + \xi_3 x_3 + \xi_4 x_4 \}_p} \1_{p^{-\vartheta(\xi_3)} \Z_p} (x_1) \int_{\Z_p}e^{2 \pi i \{ \xi_4 x_3u +  \xi_4 x_2 \frac{u^2}{2} \}_p}du.$$We can index all the non-equivalent classes among these with the set $$\{ \| \xi \|_p \leq p^n   : 1< |\xi_4|_p< |\xi_3|_p , \, \,    (\xi_1 , \xi_2 , \xi_3) \in \widehat{\mathbb{H}}_1 \}.$$
\end{itemize}
In total, we can index our representations with the set
$$\widehat{\mathcal{B}}_4:= \{ \xi  \in \widehat{\Z}_p^4 \, : \, 1 \leq |\xi_4|_p <|\xi_3|_p \, \wedge \,  (\xi_1 , \xi_2 , \xi_3) \in \widehat{\mathbb{H}}_1 , \, \text{or} \,  |\xi_3|_p = 1 \,  \wedge  \,  \xi_1 \in \Q_p / p^{\vartheta(\xi_4)} \Z_p \}.$$
These are all the desired representations since

\begin{align*}
        \sum_{ \xi \in \mathcal{B}_4 \, : \, \| \xi \|_p \leq p^n } d_\xi^2 &= \sum_{ \| \xi \|_p \leq p^n \, : \, \, |\xi_4|_p =1 } d_\xi^2+ \sum_{ \| \xi \|_p \leq p^n\, :\, \, |\xi_4|_p >1, \, \, |\xi_3 |_p = 1 } d_\xi^2 +\sum_{  \| \xi \|_p \leq p^n\, :  \, \, 1<|\xi_4|_p <|\xi_3|_p, \,\, \xi_3 \in \Q_p / p^{\vartheta(\xi_4)}\Z_p} d_\xi^2 \\ &= p^{3n} + \sum_{1< |\xi_4|_p \leq p^n } \sum_{|\xi_4|_p < |\xi_3|_p \leq p^n, \,\, \xi_3 \in \Q_p / p^{\vartheta(\xi_4)}\Z_p} \sum_{(\xi_1 , \xi_2) \in \Q_p^2 / p^{\vartheta(\xi_3)} \Z_p^2}  |\xi_3|_p^2   \\ & \, \, \, \, + \sum_{1<|\xi_4|_p \leq p^n } \sum_{|\xi_2|_p \leq p^n } \sum_{\xi_1 \in \Q_p / p^{\vartheta(\xi_4)} \Z_p } |\xi_4|_p^2  \\ &=p^{3n} + \sum_{1< |\xi_4|_p \leq p^n } \sum_{|\xi_4|_p < |\xi_3|_p \leq p^n}   |\xi_3|_p^2(p^{2n} |\xi_3|_p^{-2})    + \sum_{1<|\xi_4|_p \leq p^n } |\xi_4|_p^2(p^n)(p^n|\xi_4|_p^{-1}) \\ &=p^{3n} + p^{2n} \Big( \sum_{1< |\xi_4|_p \leq p^n } \sum_{|\xi_4|_p < |\xi_3|_p \leq p^n}   1    + \sum_{1<|\xi_4|_p \leq p^n } |\xi_4|_p \Big) \\ &=p^{3n} + p^{2n} \Big( \sum_{1< |\xi_4|_p \leq p^n } (p^n - |\xi_4|_p )   + \sum_{1<|\xi_4|_p \leq p^n } |\xi_4|_p \Big) \\ &=  p^{3n} + p^{3n} \Big( \sum_{1< |\xi_4|_p \leq p^n } 1 \Big) \\ &= p^{3n} + p^{3n}(p^n -1) = p^{4n} = |\mathcal{B}_4 / \mathcal{B}_4 (p^n \Z_p)|.
\end{align*}
Finally to conclude this section, notice how from the analysis done so far, and the Fourier analysis in general compact groups, we obtain that $$f(\mathbf{x}) =\sum_{\xi \in \widehat{\mathcal{B}}_4 } \| (\xi_3 , \xi_4)\|_p  Tr[ \pi_\xi (\mathbf{x}) \widehat{f} (\xi)], \, \, \, f \in L^2 (\mathcal{B}_4),$$where $$ \widehat{f} (\xi ) \varphi = \mathcal{F}_{\mathcal{B}_4}[f](\xi) \varphi := \int_{\mathcal{B}_4} f(\mathbf{x}) \,  \pi_{\xi}^* (\mathbf{x})   \, d\mathbf{x}, \,\, \, \, \, f \in L^2 (\mathcal{B}_4) .$$In this way, the action of any densely defined left-invariant operator on $\mathcal{D}(\mathcal{B}_4)$ can be expressed as $$T f (\mathbf{x}) = \sum_{\xi \in \widehat{\mathcal{B}}_4 }\| (\xi_3 , \xi_4)\|_p  Tr[ \pi_\xi (\mathbf{x}) \sigma_T (\xi) \widehat{f} (\xi)], $$where the symbol $\sigma_T$ of the operator $T$ is the mapping $$\sigma_T: \widehat{\mathcal{B}}_4 \to \bigcup_{\xi \in \widehat{\mathcal{B}}_4} \mathcal{L}(\mathcal{H}_\xi), \, \, \, \text{defined as } \, \, \sigma_T (\xi):= \pi_\xi^* (\mathbf{x}) T \pi_\xi (\mathbf{x})  \in \mathcal{L}(\mathcal{H}_\xi) \cong \C^{\|(\xi_3, \xi_4) \|_p \times \|(\xi_3, \xi_4) \|_p}.$$
We will use all this information in the next section, where we apply it to study an important example of a left invariant operator on $\mathcal{B}_4$ which we call \emph{the Vladimirov sub-Laplacian}.

\section{The Vladimirov Sub-Laplacian on the Engel group}

We already calculated \begin{align*}
    (\pi_{\xi } (\mathbf{x}))_{h h'} &=   e^{2 \pi i \{\xi \cdot \mathbf{x} + (\xi_3   x_2 +  \xi_4 x_3)(h') +  \xi_4 x_2 \frac{(h')^2}{2} \}_p} \1_{h' - h + p^{\vartheta(\xi_3 , \xi_4)} \Z_p}(x_1) .
\end{align*}

In order to prove Theorem \ref{TeoSpectrumSublaplacianB4} we want to use this calculate the associated symbol of $\mathscr{L}_{sub}^\alpha$, and its respective invariant sub-spaces.Let us start by computing the symbols of the directional VT operators $\partial_{X_1}^{\alpha}, \partial_{X_2}^{\alpha}$,  $\alpha>0$, by using what we got for the matrix coefficients.
For $\partial_{X_2}^{\alpha}$ we have that its associated symbol $\sigma_{\partial_{X_2}^{\alpha}} (\xi) = \partial_{X_2}^{\alpha} \pi_{\xi}|_{x= 0}$ is the diagonal matrix 
\begin{align*}
    &\int_{\Z_p} \frac{\pi_\xi (0,x_2,0,0) - I_{d_\xi}}{|x_2|_p^{\alpha + 1}} dx_2 \\ &=Diag\Big(\int_{\Z_p} \frac{1}{|x_2|_p^{\alpha +1}}(e^{2 \pi i \{\xi_2 x_2  \}_p}  \Big( d_\xi \int_{h + d_\xi \Z_p}e^{2 \pi i \{\xi_3   x_2 u +  \xi_4 x_2 \frac{u^2}{2} \}_p}  du \Big) -1 )dx_2 \, : h \in \Z_p / p^{-\vartheta(\xi_3, \xi_4)} \Z_p \Big) \\ &=Diag\Big( d_\xi \int_{h + d_\xi \Z_p} \int_{\Z_p} \frac{e^{2 \pi i \{ \xi_2 x_2 + \xi_3   x_2 u +  \xi_4 x_2 \frac{u^2}{2} \}_p} - 1}{|x_2|_p^{\alpha +1}}dx_2    du  \, : h \in \Z_p / p^{-\vartheta(\xi_3, \xi_4)} \Z_p \Big)\\&=Diag\Big( \|(\xi_3 , \xi_4)\|_p \int_{h + \|(\xi_3 , \xi_4)\|_p \Z_p}  | \xi_2  + \xi_3    u +  \xi_4  \frac{u^2}{2} |_p^\alpha    du  \, : h \in \Z_p / p^{-\vartheta(\xi_3, \xi_4)} \Z_p \Big)\\&=Diag\Big( | \xi_2  + \xi_3    h +  \xi_4  \frac{h^2}{2} |_p^\alpha    \, : h \in \Z_p / p^{-\vartheta(\xi_3, \xi_4)} \Z_p \Big).
\end{align*}Now we consider the different cases:  \[ \sigma_{\partial_{X_2}^{\alpha}} (\xi)_{hh}  =\begin{cases}
 | \xi_2 |_p^\alpha - \frac{1 - p^{-1}}{1 - p^{-(\alpha +1)}}  \quad & \text{if} \quad | \xi_3 |_p = | \xi_4 |_p = 1,\\| \xi_2  + \xi_3    h  |_p^\alpha - \frac{1 - p^{-1}}{1 - p^{-(\alpha +1)}} \quad & \text{if} \quad 1= |\xi_4|_p <|\xi_3|_p, \, \, h \in \Z_p/p^{-\vartheta(\xi_3)}\Z_p, \\| \xi_2  + \xi_3    h +  \xi_4  \frac{h^2}{2} |_p^\alpha - \frac{1 - p^{-1}}{1 - p^{-(\alpha +1)}} \quad & \text{if} \quad 1< |\xi_4|_p <|\xi_3|_p, \, \, h \in \Z_p/p^{-\vartheta(\xi_3)}\Z_p,\\| \xi_2  + \xi_4  \frac{h^2}{2} |_p^\alpha - \frac{1 - p^{-1}}{1 - p^{-(\alpha +1)}} \quad & \text{if} \quad 1= |\xi_3|_p <|\xi_4|_p, \, \, h \in \Z_p/p^{-\vartheta(\xi_4)}\Z_p.
\end{cases}
\]
For $\partial_{X_1}^{\alpha}$, let us introduce some notation: let $\psi_\xi : \Z_p \to \C^{d_\xi \times d_\xi}$ be the matrix function with entries $$(\psi_{\xi})_{hh'} (u):= e^{2 \pi i \{ \xi_1 u \}_p}\1_{h' - h + p^{- \vartheta (\xi_3 , \xi_4)} \Z_p }(u), \, \, \, u \in \Z_p.$$ The associated associated symbol $\sigma_{\partial_{X_1}^{\alpha}} (\xi) = \partial_{X_1}^{\alpha} \pi_{\xi}|_{x=0}$ is a Toeplitz matrix which can be written as  \[ \sigma_{\partial_{X_1}^{\alpha}} (\xi)_{hh'}  =\begin{cases}
 | \xi_2 |_p^\alpha - \frac{1 - p^{-1}}{1 - p^{-(\alpha +1)}}  \quad & \text{if} \quad | \xi_3 |_p = | \xi_4 |_p = 1,\\\partial^\alpha_u(e^{2 \pi i \{ \xi_1 u \}_p}\1_{h' - h' + p^{- \vartheta (\xi_3 )} \Z_p })|_{u=0} \quad & \text{if} \quad |\xi_4|_p <|\xi_3|_p, \, \, h \in \Z_p/p^{- \vartheta (\xi_3 ,)}\Z_p, \\\partial^\alpha_u(e^{2 \pi i \{ \xi_1 u \}_p}\1_{h' - h + p^{- \vartheta ( \xi_4)} \Z_p })|_{u=0} \quad & \text{if} \quad 1=|\xi_3|_p< |\xi_4|_p, \, \, h \in \Z_p/p^{- \vartheta (\xi_4)}\Z_p,
\end{cases}
\]where $\partial_{u}^\alpha$ is the VT operator $$\partial_{u}^\alpha \psi (u):= \int_{\Z_p} \frac{\psi(u - t) - \psi(u) }{|t|_p^{\alpha+1}} dt, \, \, \, \, \psi \in \mathcal{D}(\Z_p).$$ Summing up, the symbol of the Vladimirov Sub-Laplacian $\mathscr{L}^\alpha_{sub}$ is:
\[ \sigma_{\mathscr{L}^\alpha_{sub}} (\xi)  =\begin{cases}
  | \xi_1 |_p^\alpha +| \xi_2 |_p^\alpha - 2\frac{1 - p^{-1}}{1 - p^{-(\alpha +1)}}  \quad & \text{if} \quad | \xi_3 |_p= |\xi_4|_p = 1, \\
(\partial_{u}^\alpha  +| \xi_2  + \xi_3    h  |_p^\alpha - \frac{1 - p^{-1}}{1 - p^{-(\alpha +1)}} )\psi_\xi (u))|_{u=0}, \quad & \text{if} \quad 1= |\xi_4|_p <|\xi_3|_p,  \\ (\partial_{u}^\alpha  +| \xi_2  + \xi_3    h +  \xi_4  \frac{h^2}{2} |_p^\alpha - \frac{1 - p^{-1}}{1 - p^{-(\alpha +1)}} )\psi_\xi (u))|_{u=0}, \quad & \text{if} \quad 1< |\xi_4|_p <|\xi_3|_p, \\ (\partial_{u}^\alpha  +| \xi_2  + \xi_4  \frac{h^2}{2} |_p^\alpha - \frac{1 - p^{-1}}{1 - p^{-(\alpha +1)}} )\psi_\xi (u))|_{u=0}, \quad & \text{if} \quad 1= |\xi_3|_p <|\xi_4|_p,
\end{cases}
\]or component-wise $\sigma_{\mathscr{L}^\alpha_{sub}} (\xi)_{hh'}  =$ \[ \begin{cases}
  | \xi_1 |_p^\alpha +| \xi_2 |_p^\alpha - 2\frac{1 - p^{-1}}{1 - p^{-(\alpha +1)}}  \quad & \text{if} \quad | \xi_3 |_p= |\xi_4|_p = 1, \\
(\partial_{u}^\alpha  +| \xi_2  + \xi_3    h  |_p^\alpha - \frac{1 - p^{-1}}{1 - p^{-(\alpha +1)}} )(e^{2 \pi i \{ \xi_1 u \}_p}\1_{h' - h + p^{- \vartheta (\xi_3 )} \Z_p }) (u))|_{u=0}, \quad & \text{if} \quad 1= |\xi_4|_p <|\xi_3|_p,  \\ (\partial_{u}^\alpha  +| \xi_2  + \xi_3    h +  \xi_4  \frac{h^2}{2} |_p^\alpha - \frac{1 - p^{-1}}{1 - p^{-(\alpha +1)}} )(e^{2 \pi i \{ \xi_1 u \}_p}\1_{h' - h + p^{- \vartheta (\xi_3 )} \Z_p }) (u))|_{u=0}, \quad & \text{if} \quad 1< |\xi_4|_p <|\xi_3|_p, \\ (\partial_{u}^\alpha  +| \xi_2  + \xi_4  \frac{h^2}{2} |_p^\alpha - \frac{1 - p^{-1}}{1 - p^{-(\alpha +1)}} )(e^{2 \pi i \{ \xi_1 u \}_p}\1_{h' - h + p^{- \vartheta (\xi_4)} \Z_p })(u))|_{u=0}, \quad & \text{if} \quad 1= |\xi_3|_p <|\xi_4|_p,
\end{cases}
\]
Let us decompose now each space $\mathcal{V}_\xi:= \mathrm{span}_\C\{(\pi_\xi(\mathbf{x}))_{hh'} \, : \, h,h' \in \Z_p / p^{- \vartheta (\xi_3 , \xi_4)} \Z_p \}$ in the direct sum $$\mathcal{V}_{\xi} = \bigoplus_{{h'} \in  \mathcal{I}_\xi} \mathcal{V}_{\xi}^{h'}, $$where 
\begin{align*}
    \mathcal{V}_{\xi}^{h'}&:= \mathrm{span}_\C \{ (\pi_{\xi})_{hh'} \, : \, h \in \Z_p/ p^{- \vartheta (\xi_3 , \xi_4)} \Z_p \}. 
\end{align*} 
Functions in this space can actually be described as $$\mathcal{V}_{\xi}^{h'} = \Big\{ e^{2 \pi i \{\xi \cdot \mathbf{x} + (\xi_3   x_2 +  \xi_4 x_3)(h') +  \xi_4 x_2 \frac{(h')^2}{2} \}_p} \varphi(x_1)  \, : \, \varphi \in \mathcal{D}_{\vartheta(\xi_3,\xi_4)}(\Z_p) \Big\},$$where we are using the notation $$\mathcal{D}_{\vartheta(\xi_3,\xi_4)}(\Z_p^d):=\{\varphi \in \mathcal{D}(\Z_p^d) \, : \, \varphi(u + v) = \varphi(u), \, \, \, | v|_p \leq \| (\xi_3, \xi_4)\|_p^{-1} \}.$$ 
For this space, we have the alternative choice of basis $$\mathscr{e}_{\xi, h', \tau}(\mathbf{x}): = e^{ 2 \pi i \{ \\xi \cdot \mathbf{x} + (\xi_3   x_2 +  \xi_4 x_3)(h') +  \xi_4 x_2 \frac{(h')^2}{2}  + \tau x_1 \}_p} , \, \, \, 1  \leq  \tau \leq \|(\xi_3 , \xi_4) \|_p,$$and for these functions we can calculate $(\partial_{X_1}^\alpha + \partial_{X_2}^\alpha) \mathscr{e}_{\xi, h', \tau}(\mathbf{x})  = $ \[\begin{cases}(|\xi_1 + \tau|_p^{\alpha} +| \xi_2  + \xi_3    h'  |_p^\alpha - 2\frac{1 - p^{-1}}{1 - p^{-(\alpha +1)}}) \mathscr{e}_{\xi, h', \tau}(\mathbf{x}) \quad & \text{if} \quad 1= |\xi_4|_p <|\xi_3|_p, \\(|\xi_1 + \tau|_p^{\alpha} +| \xi_2  + \xi_3    h'+  \xi_4  \frac{(h')^2}{2} |_p^\alpha - 2\frac{1 - p^{-1}}{1 - p^{-(\alpha +1)}}) \mathscr{e}_{\xi, h', \tau}(\mathbf{x}) \quad & \text{if} \quad 1< |\xi_4|_p <|\xi_3|_p,\\( |\xi_1 + \tau|_p^{\alpha} +| \xi_2  + \xi_4  \frac{(h')^2}{2} |_p^\alpha - 2\frac{1 - p^{-1}}{1 - p^{-(\alpha +1)}}) \mathscr{e}_{\xi, h', \tau}(\mathbf{x}) \quad & \text{if} \quad 1= |\xi_3|_p <|\xi_4|_p.
\end{cases} \]
So, the Vladimirov sub-Laplacian acts on each finite-dimensional sub-space $\mathcal{V}_{\xi}^{h'}$ like the Schr{\"o}dinger-type operator $\partial_{x_1}^\alpha  + Q(\xi, h'),$ or more precisely, in this special sub-spaces it is just a translation of the directional Vladimirov-Taibleson operator in the direction of $x_1$. Moreover, the functions $\mathscr{e}_{\xi, h', \tau}(\mathbf{x})$ form a complete system of eigenfunctions in $V_\xi^{h'}$, proving in this way how $$\| \mathscr{L}_{sub}^\alpha |_{\mathcal{V}_\xi^{h'}} \|_{op} = \max_{\tau,  h'}\lambda_{\xi, h', \tau} (\mathscr{L}_{sub}^\alpha) \lesssim  \| \xi\|_p^\alpha  .$$In the same way \begin{align*}
    \| \mathscr{L}_{sub}^\alpha |_{\mathcal{V}_\xi^{h'}} \|_{inf} &= \min_{\tau, h'}\lambda_{\xi, h', \tau} (\mathscr{L}_{sub}^\alpha) \gtrsim | \xi_1 |_p^\alpha + |\xi_2 |_p^\alpha.
\end{align*}
This implies that in each representation space we have the estimates $$| \xi_1 |_p^\alpha + |\xi_2 |_p^\alpha \lesssim \|\mathscr{L}_{sub}^\alpha |_{\mathcal{V}_\xi} \|_{inf} \leq \|\mathscr{L}_{sub}^\alpha |_{\mathcal{V}_\xi} \|_{op} \lesssim \| \xi \|_p^\alpha.,$$but we know that $$\|\mathscr{L}_{sub}^\alpha |_{\mathcal{V}_\xi} \|_{inf} = \|\sigma_{\mathscr{L}_{sub}^\alpha}(\xi)\|_{inf}, \,\,\,\|\mathscr{L}_{sub}^\alpha |_{\mathcal{V}_\xi} \|_{op} = \|\sigma_{\mathscr{L}_{sub}^\alpha}(\xi)\|_{op},$$proving how $\mathscr{L}_{sub}^\alpha$ provides an example of an operator in the class we define next:
 
\begin{defi}\label{defiellipticandsubellipticsymbol}\normalfont
Let $\sigma$ be an invariant symbol, and let $T_\sigma$ be its associated pseudo-differential operator. We say that $\sigma$ is an \emph{elliptic symbol of order} $m \in \R$, if there are $C_1, C_2 >0$ and $m \in \R$ such that $$C_1 \| \xi \|_p^m \leq \| \sigma(\xi) \|_{inf} \leq  \| \sigma(\xi)  \|_{op} \leq C_2 \| \xi \|_p^m ,$$for $\| \xi \|_p$ large enough. Similarly, if there is an $ 0 <\delta \leq m$ such that $$C_1 \| \xi \|_p^{m-\delta} \leq \| \sigma(\xi) \|_{inf} \leq  \| \sigma(\xi) \|_{op} \leq C_2 \| \xi \|_p^m ,$$for $\| \xi \|_p$ large enough, we say that $\sigma$ is a \emph{sub-elliptic symbol}. We say that an invariant operator $T_\sigma \in \mathcal{L}(H_2^{s+m}(\mathcal{B}_4) , H_2^{s}(\mathcal{B}_4))$ is elliptic or sub-elliptic, if its associated symbol is elliptic or sub-elliptic, respectively.
\end{defi}

The collection of the results proved so far constitute the statement of Theorem \ref{TeoSpectrumSublaplacianB4}, whose proof is now completed.

\nocite{*}
\bibliographystyle{acm}
\bibliography{main}
\Addresses

\end{document}